\newtheorem{thm}{Theorem}[section]
\newtheorem{prop}[thm]{Proposition}
\newtheorem{lem}[thm]{Lemma}
\newtheorem{cor}[thm]{Corollary}
\newtheorem{conj}[thm]{Conjecture}
\theoremstyle{definition}
\newtheorem{defn}[thm]{Definition}
\newtheorem{rmk}[thm]{Remark}
\theoremstyle{remark}
\newcommand\CC{\mathbb{C}}
\newcommand\Spec{\mathcal{S}}
\newcommand\SYT{\operatorname{SYT}}
\newcommand\Dyck{\operatorname{Dyck}}
\newcommand\NC{\operatorname{NC}}
\newcommand\NN{\operatorname{NN}}
\newcommand\Mat{\operatorname{Mat}}
\newcommand\Cr{\operatorname{Cr}}
\newcommand\SL{\operatorname{SL}}
\newcommand\AC{\operatorname{AC}}
\newcommand\rlmin{\operatorname{rlmin}}
\newcommand\cc{\operatorname{c}}
\newcommand\Web{\operatorname{Web}}
\newcommand\NS{\mathsf{N}}
\newcommand\ES{\mathsf{E}}
\newcommand\OnlyXmarking[2]
\newcommand\Xmarking[2]
\newcommand\Cross[2]
\newcommand\UP[2]{\draw (#1-0.5,#2)--(#1-0.5,#2-1);}
\newcommand\EAST[2]{\draw (#1,#2-0.5)--(#1-1,#2-0.5);}
\newcommand\Asmooth[2]
\newcommand\Matching[2]{\draw (#1,0) to [out=55,in=125] (#2,0);}
\newcommand\SYM{\mathfrak{S}}
\newcommand\qand{\quad\mbox{and}\quad}
\title[A combinatorial model for the transition matrix]{A combinatorial model for the transition matrix between the Specht and web bases}
\author{Byung-Hak Hwang}
\address{Applied Algebra and Optimization Research Center, Sungkyunkwan University, Suwon,
South Korea}
\email{byunghakhwang@gmail.com}
\author{Jihyeug Jang}
\address{Department of Mathematics,
Sungkyunkwan University (SKKU), Suwon, Gyeonggi-do 16419, South Korea}
\email{4242ab@gmail.com}
\author{Jaeseong Oh}
\address{Korea Institute for Advanced Study,
85 Hoegiro, Dongdaemun-gu, Seoul 02455, South Korea}
\email{jsoh@kias.re.kr}
\begin{document}

\begin{abstract}
  We introduce a new class of permutations, called web permutations.
  Using these permutations, we provide a combinatorial interpretation for 
  entries of the transition matrix between the Specht and web bases,
  which answers Rhoades's question.
  Furthermore, we study enumerative properties of these permutations.
\end{abstract}

\maketitle

\section{Introduction and the main result}
In this article, we study the transition matrix between two famous bases,
the Specht basis and the web basis, for the irreducible representation of
the symmetric group \( \SYM_{2n} \) indexed by the partition \( (n,n) \).
Motivated by Rhoades's work~\cite{Rho19}, we give a combinatorial interpretation
for entries of the transition matrix as a certain class of permutations,
and present their interesting properties.

For an integer \( n\ge 1 \), let \( \SYM_{2n} \) be the symmetric group on the set
\( [2n] = \{ 1,\dots,2n \} \).
It is well known that each irreducible representation of \( \SYM_{2n} \) can be
indexed by a partition of \( 2n \).
For a partition \( \lambda \) of \( 2n \), we then denote
by \( \Spec^\lambda \) the irreducible representation indexed by \( \lambda \), called the \emph{Specht module}.
In this article, we narrow our focus down to the Specht module indexed by the partition \((n,n)\), and two well-studied bases for \( \Spec^{(n,n)} \).

A \emph{standard Young tableau} of shape \( (n,n) \) is an \( 2\times n \) array
of integers whose entries are \( [2n] \), and
each row and each column are increasing. See Figure~\ref{fig:SYT} for example.
\begin{figure}
  \begin{ytableau}
    1 & 3 & 4 & 6 \\
    2 & 5 & 7 & 8
  \end{ytableau} 
  \caption{A standard Young tableau of shape \( (4,4) \).} \label{fig:SYT}
\end{figure}
The set of standard Young tableaux of shape \( (n,n) \), denoted by \( \SYT(n,n) \), parametrizes
the \emph{Specht basis}
\[
  \{ v_T \in \Spec^{(n,n)} : T\in  \SYT(n,n) \}
\]
for \( \Spec^{(n,n)} \). For more details on the Specht basis and
related combinatorics, see \cite{Ful97, Sag01}.

A \emph{(perfect) matching} on \( [2n] \) is a set partition of \( [2n] \) such that
each block has size 2.
We also depict a matching on \( [2n] \) as a diagram consisting of \( 2n \)
vertices and \( n \) arcs where any pair of arcs has no common vertex.
A \emph{crossing} is a pair of arcs \( \{a,c\} \) and \( \{b,d\} \)
with \( a<b<c<d \).
A matching is called \emph{noncrossing} if the matching has no crossing,
and \emph{nonnesting} if there is no pair of arcs \( \{a,d\} \)
and \( \{b,c\} \) with \( a<b<c<d \); see Figure~\ref{fig:matching}.
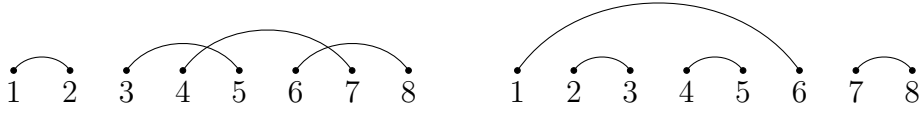
\begin{figure}
  \begin{tikzpicture}[scale=0.75]
    \Matching{1}{2}\Matching{3}{5}\Matching{4}{7}\Matching{6}{8}
    \foreach \i in {1,...,8}{
      \draw [fill] (\i,0) circle [radius=0.05] ;
      \node[below] at(\i,0) {\i};
    }
  \end{tikzpicture} \qquad
  \begin{tikzpicture}[scale=0.75]
    \Matching{1}{6}\Matching{2}{3}\Matching{4}{5}\Matching{7}{8}
    \foreach \i in {1,...,8}{
      \draw [fill] (\i,0) circle [radius=0.05] ;
      \node[below] at(\i,0) {\i};
    }
  \end{tikzpicture}
  \caption{Two matchings on \( [8] \). The first one is nonnesting, while the second one is noncrossing.}
  \label{fig:matching}
\end{figure}
For a matching $M$ and $\{i,j\}\in M$ with $i<j$, $i$ is called an
\emph{opener} and $j$ is called a \emph{closer}. 
Let \( \Mat_{2n} \) (\( \NC_{2n} \) and \( \NN_{2n} \), respectively) stand for
the set of (noncrossing and nonnesting, respectively) matchings on \( [2n] \).

Note that there is a natural bijection between \( \SYT(n,n) \) and
\( \NN_{2n} \). For \( T\in\SYT(n,n) \), connect two vertices lying
on the same column of \( T \) via an arc, then we obtain a
nonnesting matching. For instance, the tableau in Figure~\ref{fig:SYT}
and the first matching in Figure~\ref{fig:matching} are under this correspondence.
Using this correspondence, we index the Specht basis for \( \Spec^{(n,n)} \)
by nonnesting matchings of \( [2n] \), instead of standard Young tableaux of shape
\( (n,n) \):
\[
  \{ v_M \in \Spec^{(n,n)} : M\in\NN_{2n} \}.
\]

We now consider the \( 2 \times 2n \) matrix
\[
  z = 
  \begin{bmatrix}
    z_{1,1} & z_{1,2} & \dots & z_{1,2n} \\
    z_{2,1} & z_{2,2} & \dots & z_{2,2n}
  \end{bmatrix},
\]
where \( z_{i,j} \)'s are indeterminates.
For \( 1\le i<j\le 2n \), let \( \Delta_{ij}:=\Delta_{ij}(z) \) be the maximal minor of \( z \) with respect to the \( i \)th and \( j \)th columns,
i.e., \( \Delta_{ij} = z_{1,i} z_{2,j} - z_{1,j} z_{2,i} \).
For a matching \( M\in\Mat_{2n} \), let
\[
  \Delta_M := \Delta_M(z) = \prod_{\{i,j\}\in M} \Delta_{ij} \in \CC[z_{1,1},\dots,z_{2,2n}].
\]
It is important to note that the polynomials \( \Delta_{ij} \) satisfy the following relation:
For \( 1\le a<b<c<d \le 2n\),
\begin{equation} \label{eq:syzygy}
  \Delta_{ac} \Delta_{bd}
    = \Delta_{ab} \Delta_{cd} + \Delta_{ad} \Delta_{bc}.
\end{equation}
We define a vector space \( W_n \) to be the \( \CC \)-span of \( \Delta_M \)
for all \( M\in\Mat_{2n} \).
In \cite{KR84}, it turns out that the set
\begin{equation} \label{eq:basis_NC}
  \{ \Delta_{M}\in W_n : M\in\NC_{2n} \}
\end{equation}
forms a basis for \( W_n \). We call this basis the \emph{web basis}.
(The web basis was developed in the \( \SL_2 \)-invariant theory
due to Kuperburg~\cite{Kup96},
and its original construction slightly differs from the one we describe above.
But they are essentially the same; see \cite{Rho19}.)

In addition, there is a natural \( \SYM_{2n} \)-action on \( W_n \)
as follows: Regarding a permutation \( \sigma\in\SYM_{2n} \) as
a \( 2n\times 2n \) permutation matrix, define \( \sigma\cdot \Delta_M(z)
:= \Delta_M(z\sigma^{-1}) \).
Then the space \( W_n \) is closed under this action,
and hence carries an \( \SYM_{2n} \)-module structure.
Furthermore, the \( \SYM_{2n} \)-module \( W_n \) is isomorphic to
the Specht module \( \Spec^{(n,n)} \)~\cite{PPR09}.
Therefore, due to Schur's lemma, there is a unique (up to scalar)
isomorphism between \( W_n \) and \( \Spec^{(n,n)} \).

We are now in a position to give the main purpose of this article.
Let \( M_0 \) be the unique matching which is simultaneously noncrossing and
nonnesting, i.e., \( M_0 = \{ \{1,2\},\dots,\{2n-1,2n\} \} \).
Due to \cite{RT19}, the isomorphism maps \( \Delta_{M_0} \) to \( v_{M_0} \) up to scalar.
Let \( \varphi: W_n \rightarrow \Spec^{(n,n)} \) be the unique isomorphism
with \( \varphi(\Delta_{M_0}) = v_{M_0} \).
We also let \( w_M := \varphi(\Delta_M) \) for each \( M\in\NC_{2n} \).
Then the Specht basis can expand into (the image of) the web basis:
For \( M\in\NN_{2n} \),
\[
  v_M = \sum_{M'\in\NC_{2n}} a_{MM'} w_{M'}.
\]
In \cite{RT19}, Russell and Tymoczko initiated the combinatorial study of
the transition matrix
\[
  A = (a_{MM'})_{M\in\NN_{2n}, M'\in\NC_{2n}}.
\]
They constructed directed graphs on the standard
Young tableaux and noncrossing matchings, and using them,
showed the unitriangularity of the matrix.
They also gave some open problems related to their results.
One of them is the positivity of the entries of \( A \),
which was proved by Rhoades soon after.
\begin{thm}[\cite{Rho19}]
  The entries \( a_{MM'} \) of the transition matrix \( A \) are
  nonnegative integers.
\end{thm}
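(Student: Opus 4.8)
The plan is to show that each entry \(a_{MM'}\) is a \emph{straightening coefficient} --- the coefficient of the web basis vector \(\Delta_{M'}\) (\(M'\in\NC_{2n}\)) in the expansion of \(\Delta_M\) (\(M\in\NN_{2n}\)) over the basis~\eqref{eq:basis_NC} --- and that such coefficients are forced to be nonnegative integers by the shape of the relation~\eqref{eq:syzygy}. The first ingredient is the identification \(\varphi(\Delta_M)=v_M\) for \emph{every} nonnesting matching \(M\), not merely for \(M=M_0\). This is standard in this circle of ideas and is implicit in \cite{RT19,PPR09}; it also follows from the \(\SYM_{2n}\)-equivariance of \(\varphi\): writing \(T,T_0\in\SYT(n,n)\) for the tableaux corresponding to \(M,M_0\) and letting \(\pi\in\SYM_{2n}\) be the permutation that carries the entries of \(T_0\) to those of \(T\) cell by cell, one gets \(v_M=\pi\cdot v_{M_0}\) on the Specht side and, up to sign, \(\pi\cdot\Delta_{M_0}=\Delta_M\) on the web side; the sign is \(+1\) because each column of a standard Young tableau increases downward, so every factor \(\Delta_{ij}\) occurring in \(\Delta_M\) has \(i<j\). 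Granting this, if \(\Delta_M=\sum_{M'\in\NC_{2n}}c_{MM'}\Delta_{M'}\) is the (unique) web-basis expansion, then applying \(\varphi\) gives \(v_M=\sum_{M'\in\NC_{2n}}c_{MM'}w_{M'}\), so \(a_{MM'}=c_{MM'}\), and it remains to prove that the \(c_{MM'}\) are nonnegative integers.

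To do this I would run the obvious straightening procedure. If \(M\) is noncrossing, then \(\Delta_M\) is itself a basis vector and we are done. Otherwise choose a crossing of \(M\), i.e.\ arcs \(\{a,c\},\{b,d\}\in M\) with \(a<b<c<d\), and rewrite \(\Delta_{ac}\Delta_{bd}\) using~\eqref{eq:syzygy} to obtain
\[
  \Delta_M=\Delta_{M_1}+\Delta_{M_2},
\]
where \(M_1\) is obtained from \(M\) by replacing \(\{a,c\},\{b,d\}\) with \(\{a,b\},\{c,d\}\), and \(M_2\) by replacing them with \(\{a,d\},\{b,c\}\). Both coefficients on the right are \(+1\); iterating until every matching that appears is noncrossing therefore expresses \(\Delta_M\) as a \(\ZZ_{\ge0}\)-linear combination of the \(\Delta_{M'}\), \(M'\in\NC_{2n}\). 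Since~\eqref{eq:basis_NC} is a basis this is the expansion \(\sum_{M'}c_{MM'}\Delta_{M'}\), so every \(c_{MM'}\) lies in \(\ZZ_{\ge0}\), which is the assertion of the theorem.

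The one point that genuinely needs an argument --- and the main obstacle --- is that the straightening procedure \textbf{terminates}, since resolving one crossing may create new crossings among the remaining arcs. I would prove termination by showing that the total number of crossings of the matching strictly decreases at each step. The crossing between \(\{a,c\}\) and \(\{b,d\}\) disappears under either move (in \(M_1\) the two new arcs are disjoint; in \(M_2\) they nest), so it suffices to check that for every third arc \(\{p,q\}\in M\setminus\{\{a,c\},\{b,d\}\}\), the number of crossings of \(\{p,q\}\) with the new pair of arcs is at most the number of its crossings with \(\{a,c\},\{b,d\}\). This is a routine finite verification organized according to which of the five intervals \((-\infty,a),(a,b),(b,c),(c,d),(d,\infty)\) contain \(p\) and \(q\): in every case that count is unchanged or drops by \(2\), never increasing. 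Hence the number of crossings is a strictly decreasing, nonnegative, integer-valued statistic along the procedure, which therefore halts, completing the proof. This is essentially Rhoades's argument; the combinatorial model developed in the rest of the paper upgrades this straightening into an explicit enumeration of the coefficients \(a_{MM'}\).
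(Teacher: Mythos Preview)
Your proposal is correct and follows essentially the same approach that the paper sketches (attributing it to Rhoades): identify \(a_{MM'}\) with the straightening coefficient \(c_{MM'}\) in \eqref{eq: web expansion}, and obtain nonnegativity by iterating the uncrossing relation~\eqref{eq:syzygy}, using that the number of crossings strictly decreases at each step. Your write-up simply fills in two details the paper leaves implicit---the equivariance argument for \(\varphi(\Delta_M)=v_M\) when \(M\) is nonnesting, and the case check that resolving a crossing cannot raise the crossing count with any third arc---so there is no substantive difference in strategy.
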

Although Rhoades established the positivity phenomenon for entries of \( A \),
he did not find an explicit combinatorial interpretation of the nonnegative integer \( a_{MM'} \), c.f. \cite[Problem 1.3]{Rho19}.
Inspired by his work, we introduce a new family of permutations which
are enumerated by the integers \( a_{MM'} \), and study their enumerative
properties.

Our strategy is based on Rhoades's observation~\cite{Rho19}.
He figured out that the entries \( a_{MM'} \) are related to resolving crossings
of matchings in the following sense:
For a matching \( M\in\Mat_{2n} \), let \( \{ a,c \} \) and \( \{ b,d \} \) be
a crossing pair in \( M \) (if it exists) where \( a<b<c<d \).
Let \( M' \) and \( M'' \) be the matchings identical to \( M \) except that
\( \{ a,b \} \) and \( \{ c,d \} \) in \( M' \), and
\( \{ a,d \} \) and \( \{ b,c \} \) in \( M'' \).
Then, by the relation \eqref{eq:syzygy}, we have
\begin{equation}\label{Eq: web relation}
  \Delta_M = \Delta_{M'} + \Delta_{M''}.
\end{equation}
In addition, the number of crossing pairs in \( M' \) (respectively, \( M'' \)) is
strictly less than the number of crossing pairs in \( M \).
Therefore, iterating the resolving procedure gives the expansion of \( \Delta_M \)
in terms of the basis \eqref{eq:basis_NC}.
In other words, when we write 
\begin{equation} \label{eq: web expansion}
  \Delta_M = \sum_{M'\in\NC_{2n}}c_{MM'} \Delta_{M'},
\end{equation}
the coefficient \( c_{MM'} \) is equal to
the number of occurrences of the noncrossing matching \( M' \)
obtained by iteratively resolving crossings in \( M \).
Note that the order of the choice of crossing pairs does not affect the expansion of \( \Delta_M \).
Rhoades showed that for \( M\in\NN_{2n} \) and \( M'\in\NC_{2n} \),
the entry \( a_{MM'} \) of the transition matrix equals \( c_{MM'} \).
Hence, to give a combinatorial interpretation of \( a_{MM'} \), we track
the resolving process from a nonnesting matching to noncrossing matchings.

To state our main result, we need some preliminaries. First, we note that
noncrossing matchings and nonnesting matchings are \emph{Catalan objects}, that is, they are enumerated by Catalan numbers.
Another famous Catalan object is a Dyck path.
A \emph{Dyck path} of length \( 2n \)  is a lattice path from \( (0,0) \)
to \( (n,n) \) consisting of \( n \) north steps \( (1,0) \) and
\( n \) east steps \( (0,1) \) that does not pass below the line \( y=x \).
We write \( \NS \) and \( \ES \) for the north step and the east step,
respectively.
We therefore regard a Dyck path as a sequence consisting of
\( n \) \( \NS \)'s and \( n \) \( \ES \)'s.
Let \( \Dyck_{2n} \) be the set of Dyck paths of length \( 2n \).
Identifying a Dyck path with the region below the path,
we give a natural partial order on \( \Dyck_{2n} \) by inclusion,
denoted by \( \subseteq \).
For instance, the Dyck path \( \NS\cdots\NS\ES\cdots\ES \) where
\( n \) \( \NS \)'s precede \( n \) \( \ES \)'s is the maximum path
in \( \Dyck_{2n} \) with respect to the partial order,
while the path \( \NS\ES\NS\ES\cdots\NS\ES \) is the minimum path.
In Section~\ref{sec:grid configuration}, we define a map
\( D:\Mat_{2n}\rightarrow \Dyck_{2n} \), and by abuse of notation,
a map \( D:\SYM_n\rightarrow \Dyck_{2n} \).
We also define a map \( M:\SYM_n\rightarrow \NC_{2n} \).
Finally, we introduce a new family of permutations,
called \emph{web permutations}. With these data, we now present our main result.
\begin{thm} \label{thm:main_intro}
  For matchings \( M\in\NN_{2n} \) and \( M'\in\NC_{2n} \),
  the entry \( a_{MM'} \) is equal to the number of web permutations
  \( \sigma\in\SYM_n \) such that \( D(\sigma)\subseteq D(M) \) and \(M(\sigma)=M'\).
\end{thm}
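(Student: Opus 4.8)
Our strategy to prove Theorem~\ref{thm:main_intro} is as follows. By Rhoades's identity $a_{MM'}=c_{MM'}$ recalled above, where $c_{MM'}$ is the coefficient of $\Delta_{M'}$ in the fully resolved expansion~\eqref{eq: web expansion}, it suffices to prove
\[
  c_{MM'} = \#\{\,\sigma\in\SYM_n \text{ a web permutation} : D(\sigma)\subseteq D(M),\ M(\sigma)=M'\,\}.
\]
I would in fact prove this apparently stronger identity for \emph{every} matching $M\in\Mat_{2n}$, not only nonnesting ones, using the maps $D:\Mat_{2n}\to\Dyck_{2n}$, $D:\SYM_n\to\Dyck_{2n}$ and $M:\SYM_n\to\NC_{2n}$ from Section~\ref{sec:grid configuration}; the theorem is the special case $M\in\NN_{2n}$. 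Working with all matchings is essential, because although $M$ is nonnesting, resolving one of its crossings via~\eqref{Eq: web relation} produces an intermediate matching that in general is not nonnesting, so the induction has to be carried out in a class of matchings closed under resolution. (This also forces $D$ to be genuinely nontrivial on $\Mat_{2n}$: it cannot merely record the opener--closer profile of $M$, since a noncrossing matching and a nonnesting one with the same profile already have different coefficients $c_{M\,\cdot}$.)

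The induction is on $\Cr(M)$, the number of crossing pairs of $M$. In the base case $\Cr(M)=0$ we have $M\in\NC_{2n}$ and $c_{MM'}=\delta_{M,M'}$, so one only needs to check from the definitions that for a noncrossing matching $M$ there is exactly one web permutation $\sigma$ with $D(\sigma)\subseteq D(M)$, namely the canonical one attached to $M$, and that it satisfies $M(\sigma)=M$. For the inductive step, fix the distinguished crossing pair $\{a,c\},\{b,d\}$ of $M$ with $a<b<c<d$ prescribed by the grid configuration --- the one sitting in a chosen corner cell of $D(M)$ --- and let $M'_1$ (with arcs $\{a,b\},\{c,d\}$) and $M'_2$ (with arcs $\{a,d\},\{b,c\}$) be its two resolutions. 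Expanding $\Delta_M=\Delta_{M'_1}+\Delta_{M'_2}$ in the web basis and using that $\{\Delta_N:N\in\NC_{2n}\}$ is a basis gives $c_{MM'}=c_{M'_1M'}+c_{M'_2M'}$, and since $\Cr(M'_1),\Cr(M'_2)<\Cr(M)$ the induction hypothesis rewrites the right-hand side as a sum of two web-permutation counts, one below $D(M'_1)$ and one below $D(M'_2)$. It then remains to produce a bijection from the disjoint union of $\{\sigma:D(\sigma)\subseteq D(M'_1),\ M(\sigma)=M'\}$ and $\{\sigma:D(\sigma)\subseteq D(M'_2),\ M(\sigma)=M'\}$ onto $\{\sigma:D(\sigma)\subseteq D(M),\ M(\sigma)=M'\}$.

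The heart of the argument --- and the step I expect to be the main obstacle --- is this last bijection, which I would carry out entirely at the level of grid configurations. The plan is to show that $D(M'_1)$ is obtained from $D(M)$ by deleting the distinguished corner cell while $D(M'_2)$ is obtained from $D(M)$ by a local modification at that cell, so that a web permutation $\sigma$ with $D(\sigma)\subseteq D(M)$ either already satisfies $D(\sigma)\subseteq D(M'_1)$ or else ``uses'' the corner cell; in the latter case one matches $\sigma$, via the corresponding local move on grid configurations, with a web permutation below $D(M'_2)$, and one checks that the map $M(\cdot)$, hence the condition $M(\sigma)=M'$, is preserved throughout. The delicate point is the resolution $M'_2$, which leaves the nonnesting world: here one must verify that the definition of ``web permutation'' is precisely the local condition making the count additive under resolution, and that the containment order on Dyck paths transforms in the claimed way under deletion of, and the local move at, the corner cell. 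Essentially all of the combinatorial content of Section~\ref{sec:grid configuration} is consumed in establishing these compatibilities.
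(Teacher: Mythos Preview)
Your induction does not get off the ground, because it rests on a mistaken assumption about the map $D:\Mat_{2n}\to\Dyck_{2n}$. In the paper $D(M)$ \emph{is} precisely the opener--closer profile of $M$; there is nothing more to it. Consequently the stronger statement you propose to prove, namely $c_{MM'}=\#\{\sigma\in\Web_n:D(\sigma)\subseteq D(M),\ M(\sigma)=M'\}$ for \emph{all} $M\in\Mat_{2n}$, is simply false, and already the base case of your induction breaks. Take $n=2$ and the noncrossing matching $M=\{\{1,4\},\{2,3\}\}$, so $D(M)=\NS\NS\ES\ES$. Both web permutations $12$ and $21$ satisfy $D(\sigma)\subseteq D(M)$, so your formula predicts $c_{M,M_0}=1$ (coming from $\sigma=12$, since $M(12)=M_0$); but $M$ is already noncrossing and $M\neq M_0$, so $c_{M,M_0}=0$. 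You noticed the tension yourself (``this also forces $D$ to be genuinely nontrivial\dots''), but rather than being a constraint the paper's $D$ must satisfy, it is an obstruction showing your inductive scheme cannot work with the objects actually defined.

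The paper avoids this entirely by never leaving the grid model. The point is that a nonnesting matching $M$ is \emph{itself} represented as a grid configuration, namely $G(id,E(M))$ where $E(M)$ is the set of cells above $D(M)$; one checks $M(id,E(M))=M$. Thus resolving the crossings of $M$ is the same as resolving the remaining crossings of $G(id,E(M))$, and by definition the permutations arising are the elements of $\Web_M$, so $a_{MM'}=|\{\sigma\in\Web_M:M(\sigma)=M'\}|$. Now $G(id,E(M))$ is obtained from $G(id,\emptyset)$ by first smoothing every crossing in $E(M)$; since Proposition~\ref{prop:well-def} says the final set of permutations is independent of the order of resolution, $\Web_M$ consists exactly of those $\sigma\in\Web_n$ for which every cell of $E(M)$ is a crossing of $\sigma$, i.e.\ no marked cell $(i,\sigma(i))$ lies in $E(M)$, i.e.\ $D(\sigma)\subseteq D(M)$. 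That is the whole proof. The inductive bookkeeping you anticipated on Dyck paths of intermediate (non-nonnesting) matchings is replaced wholesale by the single observation that smoothing the cells of $E(M)$ first is one admissible order of resolution.
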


The theorem follows almost immediately from the definition of the novel
permutations.
However, the definition does not directly tell us whether a given permutation
is a web permutation or not.
In Theorem~\ref{thm:web=andre_cycle}, we thus explain how to characterize
these permutations in terms of their cycle structures.
Using this characterization, we deduce the results in \cite{RT19,IZ21}
concerning the unitriangularity of the transition matrix and a necessary
and sufficient condition for additional vanishing entries.

The article is organized as follows.
In Section~\ref{sec:grid configuration}, we give a new model,
called a grid configuration, for representing matchings.
Within this model, we resolve crossings in nonnesting matchings
until there is no crossing.
We then define web permutations from the noncrossing grid configurations,
and prove the main theorem.
In the next two sections, we study some properties of web permutations.
In Section~\ref{sec:characterization}, we give a characterization of
web permutations. We show that web permutations are closely related to
Andr\'e permutations.
Section~\ref{sec:enumeration} provides some interesting enumerative properties
of web permutations.
One instance of them is that web permutations are enumerated by Euler numbers.
We also give a conjecture for a relation between certain web permutations
and the Seidel triangle.
In Appendix~\ref{sec:appen}, we give some computational data of the transition
matrix and web permutations for small \( n \).

\section{Grid configurations and web permutations}
\label{sec:grid configuration}
In this section, we define grid configurations which represent matchings
in a `rigid' setting.
We describe the procedure of resolving crossings within this model.
We then introduce a new class of permutations, called web permutations.
This provides a combinatorial interpretation for the entries \( a_{MM'} \) of the transition
matrix.

Consider an $n$ by $n$ (lattice) grid in the $xy$-plane with corners
$(0,0), (0,n), (n,0)$ and $(n,n)$.
We denote each cell by $(i,j)$ where $i$ and $j$ are the $x$- and $y$-coordinates of its upper-right corner.
Let $\sigma\in\SYM_n$ be a permutation. For each $1\le i\le n$, mark the cell \( (i,\sigma(i)) \),
and draw a horizontal line to the left and a vertical line to the top from the marked cell.
We call this the \emph{empty grid configuration} of \( \sigma \).
A cell $(i,j)$ is a \emph{crossing} if
there are both a vertical line and a horizontal line through the cell, 
that is, \( \sigma(i) < j \) and \( i < \sigma^{-1}(j) \).
We denote by \( \Cr(\sigma) \) the set of all crossings of \( \sigma \).
For a subset \( E\subseteq\Cr(\sigma) \),
the \emph{grid configuration} \( G(\sigma,E) \) of a pair \( (\sigma, E) \) is defined to be
the empty grid configuration of \( \sigma \) where each crossing in \( E \)
is replaced by an elbow as shown in Figure~\ref{fig:A crossing to an elbow}.
In particular, the empty grid
configuration of \( \sigma \) is \( G(\sigma,\emptyset) \).
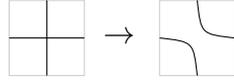
\begin{figure}
  \centering
  \begin{tikzpicture}[scale=1]
  \draw[black!20] (0,0) grid (1,1);
  \Cross{1}{1}
  \node[text width=0.5cm] at (1.5, 0.5) {$\rightarrow$};
  \draw[black!20] (2,0) grid (3,1);
  \Asmooth{3}{1}
  \end{tikzpicture}
  \caption{A crossing to an elbow.}
  \label{fig:A crossing to an elbow}
\end{figure}

For the \( n \) by \( n \) grid, we label leftmost vertical intervals
from bottom to top with 1 through $n$ and uppermost horizontal intervals
from left to right with $n+1$ through $2n$.
With this label of boundary intervals, a grid configuration can be considered
as a matching on \( [2n] \) as follows:
Each strand joining $i$th and $j$th boundary intervals represents
an arc connecting $i$ and $j$;
see Figure~\ref{Fig: The grid configuration G(1324, (1,3),(1,4))}.
We denote by $M(\sigma,E)$ the matching associated to the grid configuration
\( G(\sigma,E) \). For short, we write \( M(\sigma) = M(\sigma, \Cr(\sigma)) \).

\begin{figure}
  \centering
  \begin{tikzpicture}[scale=0.7]
      \node[left] at (0,-1.5) {1};
  \node[left] at (0,-0.5) {2};
  \node[left] at (0,0.5) {3};
  \node[left] at (0,1.5) {4};
  \node[above] at (0.5,2) {5};
  \node[above] at (1.5,2) {6};
  \node[above] at (2.5,2) {7};
  \node[above] at (3.5,2) {8};
  \draw[black!20] (0,-2) grid (4,2);
  \Xmarking{4}{2}\Xmarking{2}{1}\Xmarking{3}{0}\Xmarking{1}{-1}

  \Cross{1}{0}\Cross{2}{2}
  \UP{3}{1}
  \EAST{2}{0}
  \Asmooth{1}{2} \Asmooth{1}{1}
  \Cross{3}{2}

  \Matching{6}{8}\Matching{7}{12}\Matching{9}{11}\Matching{10}{13}
  
  \foreach \i in {1,...,8}{
    \draw [fill] (\i+5,0) circle [radius=0.05] ;
    \node[below] at(\i+5,0) {\i};
  }
  \end{tikzpicture}
  \caption{The grid configuration $G(1324,\{(1,3),(1,4)\})$ and the corresponding matching.}
  \label{Fig: The grid configuration G(1324, (1,3),(1,4))}
\end{figure}
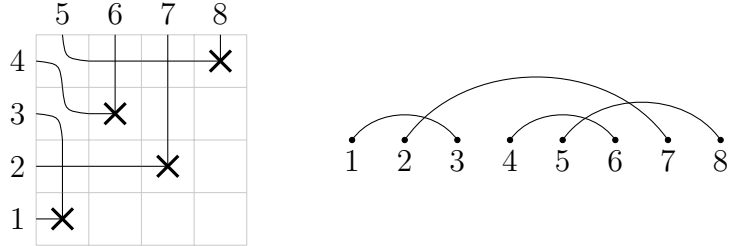

We define a partial order on cells of the $n$ by $n$ grid
by $(x,y)\succeq (x',y')$ if $x\le x'$ and $y\ge y'$.
In other words, \( (x,y)\succeq (x',y') \) if and only if the cell \( (x,y) \) lies
on the upper-left quadrant at \( (x', y') \).

The relation~\eqref{Eq: web relation} can be interpreted as a relation between grid configurations as follows.
For a permutation \( \sigma \) and \( E\subseteq\Cr(\sigma) \),
let $c=(i,j)$ be a maximal crossing in the grid configuration $G(\sigma,E)$,
i.e., there is no crossing on the upper-left quadrant at $c$.
One way of resolving $c$ results a grid configuration $G(\sigma, E\cup \{c\})$.
This procedure of resolving a crossing is called \emph{smoothing}.
The other way of resolving $c$ results a grid configuration $G(\sigma', E)$,
where $\sigma'$ is defined by 
\[
  \begin{cases}
  \sigma'(i)=j,\\
  \sigma'(\sigma^{-1}(j))=\sigma(i), \text{ and}\\
  \sigma'(k)=\sigma(k) \text{ for } k\neq i,\sigma^{-1}(j).
  \end{cases}
\]
This procedure of resolving a crossing is called \emph{switching}.
Note that the crossing sets $\Cr(\sigma)$ and $\Cr(\sigma')$ are not the same. 
Nevertheless, by choosing $c$ to be maximal, crossings not smaller than $c$
(with respect to the partial order) are left unchanged under switching.
In particular, we have \( E\subseteq \Cr(\sigma') \), so switching is well-defined.
We often consider a grid configuration $G$ as the vector $\Delta_{M(G)}$.
Therefore, we can write the relation \eqref{Eq: web relation}
in terms of grid configurations as
\[
  G(\sigma,E)=G(\sigma,E\cup\{c\})+G(\sigma',E).
\]
For example, let \( \sigma=1324\in\SYM_4 \) and \( E=\{(1,3),(1,4)\} \), and
consider the grid configuration \( G(\sigma, E) \) which is shown in
Figure~\ref{Fig: The grid configuration G(1324, (1,3),(1,4))}.
Resolving a maximal crossing  \( c=(2,4)\in\Cr(\sigma)\setminus E \),
we have
\begin{center}
  \begin{tikzpicture}[scale=0.5]
    \draw[black!20] (0,0) grid (4,4);
    \Xmarking{4}{4}\Xmarking{2}{3}\Xmarking{3}{2}\Xmarking{1}{1}
    \Cross{1}{2}\Cross{2}{4}
    \UP{3}{3}
    \EAST{2}{2}
    \Asmooth{1}{4} \Asmooth{1}{3}
    \Cross{3}{4}

    \node[circle, fill=red, draw=red, scale=0.3] at (1.5,3.5) {};
    \node[text width=0.5cm] at (5, 2) {$=$};

    \draw[black!20] (6,0) grid (10,4);
    \Xmarking{10}{4}\Xmarking{8}{3}\Xmarking{9}{2}\Xmarking{7}{1}

    \Cross{7}{2}\Asmooth{8}{4}
    \UP{9}{3}
    \EAST{8}{2}
    \Asmooth{7}{4} \Asmooth{7}{3}
    \Cross{9}{4}
      \node[text width=0.5cm] at (11, 2) {$+$};
    \draw[black!20] (12,0) grid (16,4);
    \Xmarking{16}{3}\Xmarking{14}{4}\Xmarking{15}{2}\Xmarking{13}{1}

    \Cross{13}{2}
    \UP{15}{3}\UP{16}{4}
    \EAST{14}{2}
    \EAST{14}{3} \EAST{15}{3}
    \Asmooth{13}{4} \Asmooth{13}{3}
    \UP{15}{4}
  \end{tikzpicture}.
\end{center}
Here, the red dot indicates the crossing \( c \).

From the grid configuration $G(id, \emptyset)$, we obtain two grid
configurations by resolving a crossing by smoothing and switching, respectively.
By resolving crossings until there is no crossing left,
we get grid configurations of the form $G(\sigma, \Cr(\sigma))$.
For each remaining grid configuration $G(\sigma, \Cr(\sigma))$,
the permutation $\sigma$ is called a \emph{web permutation} of $[n]$ and
we denote the set of web permutations of $[n]$ by $\Web_n$.
In other words, we have
\begin{equation}\label{Equation: empty grid=sum of grid}
  G(id, \emptyset) = \sum G(\sigma, \Cr(\sigma)),
\end{equation}
where the right hand side is the sum of all grid configurations obtained
by resolving crossings from the grid configuration \( G(id,\emptyset) \)
until there is no crossing left.
This is reminiscent of \eqref{eq: web expansion}.
For example, starting from the grid configuration $G(id,\emptyset)$ for $n=3$,
we have 
\begin{align*}
    \begin{tikzpicture}[scale=0.5]
        \draw[black!20] (0,0) grid (3,3);
        \Xmarking{1}{1}\Xmarking{2}{2}\Xmarking{3}{3}
        \Cross{1}{2}\Cross{1}{3}\Cross{2}{3}
            \node[circle, fill=red, draw=red, scale=0.3] at (0.5,2.5) {};
    \end{tikzpicture}
  &\begin{tikzpicture}[scale=0.5]
        \draw[white!20] (0,0) grid (1,3);
        \node[text width=0.5cm] at (0.5, 1.5) {$=$};
        \end{tikzpicture}
    \begin{tikzpicture}[scale=0.5]
        \draw[black!20] (0,0) grid (3,3);
        \Xmarking{1}{1}\Xmarking{2}{2}\Xmarking{3}{3}
        \Cross{1}{2}\Asmooth{1}{3}\Cross{2}{3}
            \node[circle, fill=red, draw=red, scale=0.3] at (1.5,2.5) {};
    \end{tikzpicture}
    \begin{tikzpicture}[scale=0.5]
        \draw[white!20] (0,0) grid (1,3);
        \node[text width=0.5cm] at (0.5, 1.5) {$+$};
        \end{tikzpicture}
    \begin{tikzpicture}[scale=0.5]
        \draw[black!20] (0,0) grid (3,3);
        \Xmarking{1}{3}\Xmarking{2}{2}\Xmarking{3}{1}
        \UP{2}{3}\EAST{1}{2}\UP{3}{2}\UP{3}{3}\EAST{1}{1}\EAST{2}{1}
    \end{tikzpicture}\\
    &\begin{tikzpicture}[scale=0.5]
        \draw[white!20] (0,0) grid (1,3);
        \node[text width=0.5cm] at (0.5, 1.5) {$=$};
        \end{tikzpicture}\begin{tikzpicture}[scale=0.5]
        \draw[black!20] (0,0) grid (3,3);
        \Xmarking{1}{1}\Xmarking{2}{2}\Xmarking{3}{3}
        \Cross{1}{2}\Asmooth{1}{3}\Asmooth{2}{3}
            \node[circle, fill=red, draw=red, scale=0.3] at (0.5,1.5) {};
    \end{tikzpicture}
        \begin{tikzpicture}[scale=0.5]
        \draw[white!20] (0,0) grid (1,3);
        \node[text width=0.5cm] at (0.5, 1.5) {$+$};
        \end{tikzpicture}
            \begin{tikzpicture}[scale=0.5]
        \draw[black!20] (0,0) grid (3,3);
        \Xmarking{1}{1}\Xmarking{2}{3}\Xmarking{3}{2}
        \Cross{1}{2}\Asmooth{1}{3}\EAST{2}{2}\UP{3}{3}
                    \node[circle, fill=red, draw=red, scale=0.3] at (0.5,1.5) {};
    \end{tikzpicture}
        \begin{tikzpicture}[scale=0.5]
        \draw[white!20] (0,0) grid (1,3);
        \node[text width=0.5cm] at (0.5, 1.5) {$+$};
        \end{tikzpicture}
    \begin{tikzpicture}[scale=0.5]
        \draw[black!20] (0,0) grid (3,3);
        \Xmarking{1}{3}\Xmarking{2}{2}\Xmarking{3}{1}
        \UP{2}{3}\EAST{1}{2}\UP{3}{2}\UP{3}{3}\EAST{1}{1}\EAST{2}{1}
    \end{tikzpicture}\\
    &\begin{tikzpicture}[scale=0.5]
        \draw[white!20] (0,0) grid (1,3);
        \node[text width=0.5cm] at (0.5, 1.5) {$=$};
        \end{tikzpicture}    \begin{tikzpicture}[scale=0.5]
        \draw[black!20] (0,0) grid (3,3);
        \Xmarking{1}{1}\Xmarking{2}{2}\Xmarking{3}{3}
        \Asmooth{1}{2}\Asmooth{1}{3}\Asmooth{2}{3}
    \end{tikzpicture}
        \begin{tikzpicture}[scale=0.5]
        \draw[white!20] (0,0) grid (1,3);
        \node[text width=0.5cm] at (0.5, 1.5) {$+$};
        \end{tikzpicture}
    \begin{tikzpicture}[scale=0.5]
        \draw[black!20] (0,0) grid (3,3);
        \Xmarking{1}{2}\Xmarking{2}{1}\Xmarking{3}{3}
        \Asmooth{1}{3}\Asmooth{2}{3}
        \EAST{1}{1}\UP{2}{2}
    \end{tikzpicture}
        \begin{tikzpicture}[scale=0.5]
        \draw[white!20] (0,0) grid (1,3);
        \node[text width=0.5cm] at (0.5, 1.5) {$+$};
        \end{tikzpicture}
    \begin{tikzpicture}[scale=0.5]
        \draw[black!20] (0,0) grid (3,3);
        \Xmarking{1}{1}\Xmarking{2}{3}\Xmarking{3}{2}
        \Asmooth{1}{2}\Asmooth{1}{3}\EAST{2}{2}\UP{3}{3}
    \end{tikzpicture}
        \begin{tikzpicture}[scale=0.5]
        \draw[white!20] (0,0) grid (1,3);
        \node[text width=0.5cm] at (0.5, 1.5) {$+$};
        \end{tikzpicture}
    \begin{tikzpicture}[scale=0.5]
        \draw[black!20] (0,0) grid (3,3);
        \Xmarking{1}{2}\Xmarking{2}{3}\Xmarking{3}{1}
        \Asmooth{1}{3}\EAST{2}{1}\EAST{1}{1}\UP{3}{3}\UP{3}{2}
    \end{tikzpicture}
        \begin{tikzpicture}[scale=0.5]
        \draw[white!20] (0,0) grid (1,3);
        \node[text width=0.5cm] at (0.5, 1.5) {$+$};
        \end{tikzpicture}
    \begin{tikzpicture}[scale=0.5]
        \draw[black!20] (0,0) grid (3,3);
        \Xmarking{1}{3}\Xmarking{2}{2}\Xmarking{3}{1}
        \UP{2}{3}\EAST{1}{2}\UP{3}{2}\UP{3}{3}\EAST{1}{1}\EAST{2}{1}
    \end{tikzpicture}.  
  \end{align*}
Therefore we conclude that $\Web_3=\{123, 213, 132, 231, 321\}$.
The following proposition justifies that web permutations are well-defined. 

\begin{prop} \label{prop:well-def}
  The expansion in \eqref{Equation: empty grid=sum of grid} is unique.
  In other words, the grid configurations appearing in
  \eqref{Equation: empty grid=sum of grid} does not depend on
  the order of resolving procedure (choice of maximal crossings).
  In addition, the permutations $\sigma$ in
  \eqref{Equation: empty grid=sum of grid} are all distinct. 
\end{prop}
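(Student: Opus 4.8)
The plan is to prove the two assertions separately, both by exploiting the fact that the resolving process is a local rewriting rule together with the confluence coming from the syzygy relation \eqref{eq:syzygy}.

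\medskip

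\noindent\textbf{Uniqueness of the expansion.} First I would set up the resolving process as an abstract rewriting system whose objects are formal sums of grid configurations $G(\sigma,E)$ with $E\subseteq\Cr(\sigma)$, and whose single-step rule replaces one summand $G(\sigma,E)$ (having at least one crossing) by $G(\sigma,E\cup\{c\})+G(\sigma',E)$ for a choice of maximal crossing $c$. Termination is already established in the text: smoothing strictly shrinks $\Cr(\sigma)\setminus E$, while switching, by the choice of $c$ maximal, leaves all crossings weakly above $c$ unchanged and strictly decreases the number of crossings (this is exactly the remark in the excerpt that parallels \eqref{Eq: web relation}). So it remains to prove \emph{confluence}, and by Newman's lemma it suffices to check \emph{local confluence}: if from a single configuration $G(\sigma,E)$ we resolve two different maximal crossings $c_1\ne c_2$, the two resulting two-term sums can be brought to a common sum by further resolving. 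I would do this by a direct case analysis on the relative position of $c_1$ and $c_2$ in the grid. Since both are maximal, neither lies in the upper-left quadrant of the other, so the four strands involved occupy essentially disjoint ``regions'' of the configuration; resolving $c_1$ does not destroy $c_2$ as a maximal crossing of each of the two new configurations, and vice versa. Hence one can resolve $c_1$ then $c_2$, or $c_2$ then $c_1$, and both orders yield the same four-term sum of grid configurations. (Concretely this is the classical statement that the planar ``resolution tree'' for $\Delta_M$ is well-defined; here it is dressed up in the rigid grid model, and the key input is \eqref{eq:syzygy}, equivalently \eqref{Eq: web relation}, applied at two non-interfering crossings.) Local confluence plus termination gives a unique normal form, which is precisely the multiset of grid configurations on the right-hand side of \eqref{Equation: empty grid=sum of grid}; in particular it is independent of all choices.

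\medskip

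\noindent\textbf{Distinctness of the permutations.} For the second assertion I would argue that the map $\sigma\mapsto M(\sigma)=M(\sigma,\Cr(\sigma))$ is injective on the set of permutations appearing in \eqref{Equation: empty grid=sum of grid}, which forces those $\sigma$ to be distinct. The cleanest route is to recover $\sigma$ from the noncrossing grid configuration $G(\sigma,\Cr(\sigma))$: in such a configuration every crossing has been turned into an elbow, so each marked cell $(i,\sigma(i))$ is the unique cell from which a horizontal strand departs leftward and a vertical strand departs upward with no further crossing above-left; tracing the strands of the matching $M(\sigma)$ back through the elbows identifies the marked cells, hence $\sigma$. Alternatively, and perhaps more in the spirit of the paper, one can observe that each grid configuration $G(\sigma,\Cr(\sigma))$ in \eqref{Equation: empty grid=sum of grid} carries the extra datum of \emph{which} strands are ``straight'' versus ``bent'', and two distinct permutations produce genuinely different pictures; since by the first part the multiset of pictures is well-defined, and since the process never produces the same picture twice (each resolution step produces two configurations that differ from each other and from everything produced on the other branch — this is where one again uses maximality of $c$, so that the smoothed and switched children are never later identified), all the $\sigma$ are distinct. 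I would phrase the final write-up using the first, reconstruction-based argument since it is self-contained.

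\medskip

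\noindent\textbf{Main obstacle.} The routine part is termination and the reconstruction of $\sigma$; the real work is the local confluence case analysis. The subtlety is that switching at $c_1$ changes $\sigma$ to $\sigma'$ and hence changes the crossing set, so one must verify carefully that $c_2$ survives as a \emph{maximal} crossing of $G(\sigma',E)$ (and symmetrically), and that the two double-resolutions produce literally the same four grid configurations rather than merely configurations representing the same $\Delta$. Pinning down the handful of geometric cases for the relative positions of two mutually non-comparable maximal crossings — and checking that the strand surgery commutes in each — is the technical heart of the proof.
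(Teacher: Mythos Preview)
Your uniqueness argument via local confluence and Newman's lemma is essentially the paper's approach, just packaged more abstractly: the paper phrases it as ``any two linear extensions of $\succeq$ differ by a sequence of swaps of incomparable cells, so it suffices to check that resolving two maximal crossings $c,c'$ commutes,'' and then does the short case analysis you defer. One caveat: your claim that ``the four strands involved occupy essentially disjoint regions'' is not always true---two maximal crossings can share a strand (this is the paper's first case, where the $y$-coordinate of $c$ coincides with the $x$-coordinate of $c'$), and that case genuinely needs separate checking, since switching at one crossing moves a marking onto the row or column of the other.

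Your distinctness argument, however, has a real gap. The assertion that $\sigma\mapsto M(\sigma)$ is injective on web permutations is \emph{false}: already for $n=4$ the web permutations $2341=(1,2,3,4)$ and $4231=(1,4)(2)(3)$ yield the same noncrossing matching (this is precisely why the transition matrix has entries larger than $1$). Recovering $\sigma$ from the full grid configuration $G(\sigma,\Cr(\sigma))$ is of course trivial---the markings are part of the data---but that says nothing about whether the same configuration arises twice as a leaf of the resolution tree, which is what you must rule out. The paper's argument is different and works branch-by-branch: at a maximal crossing $c=(i,j)$, every terminal permutation $\tau$ reached through the smoothed child satisfies $\tau(i)<j$ (an elbow persists at $c$), while every terminal permutation $\rho$ reached through the switched child satisfies $\rho(i)=j$ (a marking sits at $c$). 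Thus the two subtrees contribute disjoint sets of permutations, and induction down the tree gives distinctness. Your ``alternative'' paragraph gestures toward this idea, but the reconstruction-based argument you say you would actually write up does not establish the claim.
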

\begin{proof}
  Any total order extending the partial order \( \succeq \) on cells can be
  obtained from another total order by applying a sequence of changing
  the order of two incomparable cells.
  Therefore it suffices to show that we can change the order of
  two maximal crossings.
  Let $c$ and $c'$ be two maximal crossings in a grid configuration
  $G(\sigma,E)$ with the $x$ coordinate of $c$ is less
  than the $x$-coordinate of $c'$.
  There are two cases: The $y$-coordinate of $c$ and $x$-coordinate of $c'$
  are the same, or not.
  Two such cases are depicted in Figure~\ref{Fig: two cases of maximal crossings},
  where the crossings $c$ and $c'$ are indicated by red dots.

  For the first case, if we resolve both $c$ and $c'$ in the same way
  (both by smoothing or both by switching), the order of resolving $c$ and $c'$
  is irrelevant. Therefore, it remains to show that if we resolve $c$ in a way and
  $c'$ in the other way results the same grid configuration when we resolve
  $c'$ first and then $c$, which can be checked directly.
  In addition, it is clear that the order of resolving
  crossings $c$ and $c'$ is irrelevant for the second case. 

  Let $G(\sigma, E)$ be a grid configuration and $c=(i,j)$ be a maximal crossing
  in $G(\sigma,E)$.
  Suppose that we resolve $c$ by smoothing and then resolve other crossings
  until there is no crossing to obtain a grid configuration of
  the form $G(\tau, \Cr(\tau))$. Since there is an elbow at $c$,
  we have $\tau(i)<j$. On the other hand, suppose that we resolve $c$ by switching
  and then resolve other crossings until there is no crossing to obtain a grid
  configuration of the form $G(\rho, \Cr(\rho))$. Since there is a marking at $c$,
  we have $\rho(i)=j$. By this observation, we conclude that web permutations are all distinct.
\end{proof}

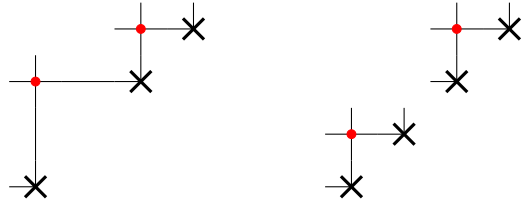
\begin{figure}
  \centering
    \begin{tikzpicture}[scale=0.7]
      \Xmarking{4}{4}\Xmarking{3}{3}\Xmarking{1}{1}
      \Cross{1}{3} \Cross{3}{4}
      \UP{1}{2}
      \EAST{2}{3}
      \node[circle, fill=red, draw=red, scale=0.3] at (0.5,2.5) {};
      \node[circle, fill=red, draw=red, scale=0.3] at (2.5,3.5) {};
  
    \Xmarking{10}{4}\Xmarking{9}{3}\Xmarking{8}{2}\Xmarking{7}{1}
  
      \Cross{7}{2} \Cross{9}{4}
      \node[circle, fill=red, draw=red, scale=0.3] at (6.5,1.5) {};
      \node[circle, fill=red, draw=red, scale=0.3] at (8.5,3.5) {};
  
    \end{tikzpicture}
  
  \caption{Two cases of maximal crossings.}
  \label{Fig: two cases of maximal crossings}
  \end{figure}

\begin{figure}
  \centering
  \begin{tikzpicture}[scale=0.7]
    \draw[black!20] (0,0) grid (5,5);
    \OnlyXmarking{1}{2}\OnlyXmarking{2}{1}\OnlyXmarking{3}{3}\OnlyXmarking{4}{5}\OnlyXmarking{5}{4}
    \draw[line width=0.5mm,blue] (0,0)--(0,2);
    \draw[line width=0.5mm,blue] (0,2)--(2,2);
    \draw[line width=0.5mm,blue] (2,2)--(2,3);
    \draw[line width=0.5mm,blue] (2,3)--(3,3);
    \draw[line width=0.5mm,blue] (3,3)--(3,5);
    \draw[line width=0.5mm,blue] (3,5)--(5,5);
  \end{tikzpicture}
  \caption{The Dyck path \( D(\sigma) \) associated to \( \sigma=21354 \) is
  \( \NS\NS\ES\ES\NS\ES\NS\NS\ES\ES \).}
  \label{fig:D_sigma}
\end{figure}

For a matching $M$, record $\NS$ for openers and $\ES$ for closers reading $M$
from left to right.
This gives the Dyck path $D(M)$ in the $n$ by $n$ grid.
It is known that the two restrictions of the map
\( D:\Mat_{2n}\rightarrow \Dyck_{2n} \) to \( \NC_{2n} \) and \( \NN_{2n} \)
are bijections.
To a permutation $\sigma$, we associate the minimum Dyck path \( D(\sigma) \)
where every cell \( (i,\sigma(i)) \) lies below the path;
see Figure~\ref{fig:D_sigma}.

Given a nonnesting matching \( M\in\NN_{2n} \), let \( E(M) \) be the set of
cells in the $n$ by $n$ grid which are above the path $D(M)$.
It is easy to see that the matchings \( M \) and $M(id, E(M))$ coincide.
For example, let \( M \) be the first matching in Figure~\ref{fig:matching}.
Then the corresponding path \( D(M) \) is \( \NS\ES\NS\NS\ES\NS\ES\ES \),
and \( E(M) = \{ (1,2), (1,3), (1,4), (2,4) \} \).
The grid configuration \( G(id, E(M)) \) is shown in Figure~\ref{fig:G(id,E(M))},
and one can see \( M(id, E(M)) = M \).
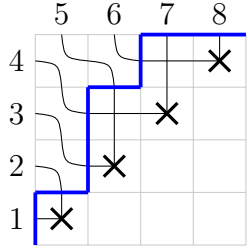
\begin{figure}
  \begin{tikzpicture}[scale=0.7]
    \node[left] at (0,-1.5) {1};
    \node[left] at (0,-0.5) {2};
    \node[left] at (0,0.5) {3};
    \node[left] at (0,1.5) {4};
    \node[above] at (0.5,2) {5};
    \node[above] at (1.5,2) {6};
    \node[above] at (2.5,2) {7};
    \node[above] at (3.5,2) {8};
    \draw[black!20] (0,-2) grid (4,2);
    \Xmarking{4}{2}\Xmarking{3}{1}\Xmarking{2}{0}\Xmarking{1}{-1}
    \Asmooth{1}{0} \Asmooth{1}{1} \Asmooth{1}{2} \Asmooth{2}{2}
    \Cross{2}{1} \Cross{3}{2}
    \draw[line width=0.5mm,blue] (0,-2)--(0,-1);
    \draw[line width=0.5mm,blue] (0,-1)--(1,-1);
    \draw[line width=0.5mm,blue] (1,-1)--(1,1);
    \draw[line width=0.5mm,blue] (1,1)--(2,1);
    \draw[line width=0.5mm,blue] (2,1)--(2,2);
    \draw[line width=0.5mm,blue] (2,2)--(4,2);
  \end{tikzpicture}
  \caption{The grid configuration \( G(id, E(M)) \) and the Dyck path \( D(M) \) where
            \( M=\{ \{1,2\},\{3,5\},\{4,7\},\{6,8\} \} \).}
  \label{fig:G(id,E(M))}
\end{figure}
Similarly to the definition of \( \Web_n \), we consider the equation
\[
  G(id,E(M)) = \sum G(\sigma,\Cr(\sigma)),
\]
where the right hand side is the summation of grid configurations obtained by resolving
crossings in \( G(id,E(M)) \) until there is no crossing.
We then define \( \Web_M \) to be
the set of permutations \( \sigma \) appearing in the right hand side of the above equation.
In particular, \( \Web_n = \Web_M \) where
\( M=\{ \{1,n+1\}, \{2,n+2\},\dots, \{n,2n\} \}. \)

Using the above notations, we prove one of our main results that tells us which
web permutations contribute to the entry $a_{MM'}$.
\begin{proof}[Proof of Thereom~\ref{thm:main_intro}]
  By the definition of web permutations, we have 
  \begin{align*}
    a_{MM'} = | \{ \sigma\in \Web_M : M(\sigma) = M' \} |.
  \end{align*}
  Hence it is enough to show that
  \begin{equation}\label{eq:web(M)}
    \Web_M = \{ \sigma\in \Web_n : D(\sigma)\subseteq D(M) \}.
  \end{equation}
  We can obtain the grid configuration \( G(id,E(M)) \) from \( G(id,\emptyset) \) by smoothing crossings in \( E(M) \).
  Since Proposition~\ref{prop:well-def} says that \( \Web_n \) does not depend on the order of resolving processes, we obtain \( \Web_M \subseteq \Web_n \).
  From this, it is clear that
  \begin{align*}
    \Web_M &= \{ \sigma\in \Web_n : E(M)\subseteq \Cr(\sigma)\}  \\
    &= \{ \sigma\in \Web_n : (i,\sigma(i)) \not\in E(M) \mbox{ for all } i \},
  \end{align*}
  which proves the claim \eqref{eq:web(M)}.
\end{proof}

\section{Characterization of web permutations}
\label{sec:characterization}
In the previous section, we have introduced the new class of permutations
which are obtained by tracking the resolving process.
In fact, Theorem~\ref{thm:main_intro} is just a byproduct of the definition of
web permutations.
In this section, we provide a characterization of these permutations.
This characterization depends only on their permutation structure.
Using this characterization, we also prove the results in \cite{RT19,IZ21}.

We begin with recalling two ways to represent permutations.
One way is the \emph{one-line notation} which we have already used,
that is, regarding a permutation as a word.
More precisely, for a permutation \( \sigma:[n]\rightarrow[n] \),
we write \( \sigma =  \sigma_1 \sigma_2 \dots \sigma_n \)
where \( \sigma_i = \sigma(i) \).
Another way to write permutations is the \emph{cycle notation}.
Instead of the precise definition of this notation, we give an example;
for the definition, see \cite{Sta12}.
Let \( \sigma = 564132\in\SYM_6 \), then the cycle notation of \( \sigma \) is \( (1,5,3,4)(2,6) \).
We always use parentheses and commas for writing cycles.

To describe our characterization of web permutations, we review the notion of
Andr\'e permutations and define an analogue of them.
Andr\'e permutations were introduced by Foata and Sch\"utzenberger~\cite{FSch73},
and have been studied with several applications in the literature, see, e.g., \cite{Sta94, FH16}.
One of the interesting properties of them is that they are enumerated by Euler numbers;
see Section~\ref{sec:enumeration}.

We now think of permutations as words consisting of distinct positive integers.
\emph{Andr\'e permutations} are defined recursively as follows.
First, the empty word and each one-letter word are Andr\'e permutations.
For a permutation \( w=w_1 w_2 \cdots w_n \) with \( n\ge 2 \), 
let \( w_k \) be the smallest letter in \( w \). Then \( w \) is an Andr\'e
permutation if both \( w_1 \cdots w_{k-1} \) and \( w_{k+1}\cdots w_{n} \) are 
Andr\'e permutations and \( \max\{w_1,\dots,w_{k-1}\} < \max\{w_{k+1},\dots,w_n\} \).
For example, a word 547239 is an Andr\'e permutation because
the letter 2 is smallest, both two words 547 and 39 are Andr\'e permutations,
and \( \max\{5,4,7\}<\max\{3,9\} \).
Using this notion, we define a cycle analogue of Andr\'e permutations.
\begin{defn} \label{def:Andre cycle}
  Let \( C=(a_1,\dots,a_k) \) be a cycle with \( a_1 = \min \{a_1,\dots,a_k\} \).
  We say that \( C \) is an \emph{Andr\'e cycle}
  if the permutation \( a_2\cdots a_k \) is an Andr\'e permutation.
\end{defn}
For instance, a cycle \( C = (2,3,9,1,5,4,7) \) is an Andr\'e cycle
since \( C = (1,5,4,7,2,3,9) \) and the permutation 547239 is an Andr\'e permutation.

For a cycle \( C=(a_1,\dots,a_k) \), we write \( \min C = \min \{a_1,\dots,a_k\} \)
and \( \max C = \max \{a_1,\dots,a_k\}\) for short.
The following lemma is useful in the sequel.
\begin{lem} \label{lem:cycle_minmax}
  Let \( C=(a_1,\dots,a_k) \) be an Andr\'e cycle with \( a_1=\min C \).
  Then \( a_k=\max C \).
\end{lem}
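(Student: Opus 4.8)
The plan is to argue by induction on the length $k$ of the cycle, using the recursive definition of Andr\'e permutations directly. The base cases $k=1$ and $k=2$ are immediate, so assume $k\ge 3$ and that the claim holds for all shorter Andr\'e cycles. Write $C=(a_1,\dots,a_k)$ with $a_1=\min C$; by Definition~\ref{def:Andre cycle}, the word $w=a_2 a_3\cdots a_k$ is an Andr\'e permutation. I want to show $a_k=\max\{a_2,\dots,a_k\}=\max C$.

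Apply the recursive definition to $w$: since $k\ge 3$, the word $w$ has length $\ge 2$, so if $w_j$ is the smallest letter of $w$, then both $u:=a_2\cdots a_{j-1}$ and $v:=a_{j+1}\cdots a_k$ are Andr\'e permutations with $\max u<\max v$ (where $\max$ of the empty word is interpreted as $-\infty$, so in particular $v$ is nonempty as soon as $w$ has $\ge 2$ letters; note $w_j$ is the second-smallest letter of $C$ overall, as $a_1$ was removed). The letter $a_k$ is the last letter of $w$, hence the last letter of $v$ (since $v$ is nonempty). Now $v$, being an Andr\'e permutation ending in $a_k$, corresponds to the Andr\'e cycle obtained by prepending its minimum: if $m=\min v$ then $(m, \text{rest of } v)$ read as a cycle is an Andr\'e cycle whose defining word is $v$ — more precisely, the cycle $(\min v, \dots)$ with word $v$ has length $\le k-1$, so by the induction hypothesis its last entry equals its maximum, i.e. $a_k=\max v$. (If $v$ has length $1$ this is trivial.) Combining, $a_k=\max v$, and since $\max u<\max v$ and $w_j$ is the global minimum of $w$, every letter of $w$ is either $w_j$, or in $u$, or in $v$, so $\max w=\max\{\max u,\max v\}=\max v=a_k$. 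Therefore $a_k=\max C$.

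The one technical point to handle carefully is the bookkeeping of the $\max$ convention for empty words and the edge case where $v$ is a single letter; once that is pinned down, the recursion does all the work. The main obstacle — which is really just a matter of phrasing rather than a genuine difficulty — is ensuring that the sub-word $v$ genuinely inherits the structure of a (shorter) Andr\'e cycle so that the induction hypothesis applies: this is exactly the content of Definition~\ref{def:Andre cycle}, since an Andr\'e permutation $v$ together with its minimum as a distinguished first entry is by definition an Andr\'e cycle. I expect no serious complication beyond this.
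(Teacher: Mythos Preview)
Your argument is essentially the paper's: both reduce to the fact that the last letter of an Andr\'e permutation is its maximum, which the paper simply asserts in one line and you prove by induction.

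One notational slip needs fixing. When you form ``the cycle $(\min v,\dots)$ with word $v$'' to invoke the induction hypothesis, this does not parse: $\min v$ is already a letter of $v$, so you cannot prepend it and still have the remaining word equal $v$ (and ``$(m,\text{rest of }v)$'' has defining word $v\setminus\{m\}$, which you do not know to be Andr\'e). The cycle you actually want is $(a_j,a_{j+1},\dots,a_k)$, where $a_j$ is the minimum of $w$ (not of $v$); since $a_j<\min v$, this is an Andr\'e cycle of length $k-j+1\le k-1$ with defining word exactly $v$, and then the induction hypothesis gives $a_k=\max\{a_j,a_{j+1},\dots,a_k\}=\max v$. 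With that correction your proof is complete. (Alternatively, and more cleanly, you could phrase the induction directly on the length of the Andr\'e permutation rather than detouring through cycles.)
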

\begin{proof}
  By definition, the last letter of an Andr\'e permutation is the largest element
  in the permutation. This fact directly gives the proof.
\end{proof}
The following lemma gives how to obtain a new Andr\'e cycle from old Andr\'e cycles.
\begin{lem} \label{lem:merging}
  Let \( C_1=(a_1,\dots,a_k) \) and \( C_2=(b_1,\dots,b_\ell) \) be Andr\'e cycles with
  \( a_1 = \min C_1 \) and \( b_1=\min C_2 \).
  If \( a_1<b_1 \) and \( a_k<b_\ell \), then the cycle \( (a_1,\dots,a_k, b_1,\dots,b_\ell) \) is also an Andr\'e cycle.
\end{lem}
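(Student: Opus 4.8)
The plan is to convert the statement into a fact about concatenations of Andr\'e \emph{words} and prove that fact by induction on the length of the first factor. Write $v := a_2\cdots a_k$ and $u := b_1\cdots b_\ell$ (these have disjoint entry sets). By Definition~\ref{def:Andre cycle}, $v$ is an Andr\'e word — empty when $k=1$ — and $b_2\cdots b_\ell$ is an Andr\'e word; since $b_1=\min C_2$ occupies the first position of $u$, the word $u$ is Andr\'e as well. Because $a_1=\min C_1$ and $a_1<b_1\le b_j$ for all $j$, the entry $a_1$ is the minimum of the merged cycle, so by Definition~\ref{def:Andre cycle} it suffices to prove that $w:=vu=a_2\cdots a_k\,b_1\cdots b_\ell$ is an Andr\'e word. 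Finally, by Lemma~\ref{lem:cycle_minmax} and the fact that the last letter of a nonempty Andr\'e word is its maximum, $\max v=a_k$ and $\max u=b_\ell$, so the hypothesis $a_k<b_\ell$ reads $\max v<\max u$.

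The core is the following claim, proved by induction on $|v|$ with $u=b_1\cdots b_\ell$ fixed: \emph{if $v$ is an Andr\'e word with entry set disjoint from that of $u$, and $v$ is empty or has last letter less than $b_\ell$, then $vu$ is an Andr\'e word.} If $v=\emptyset$ then $vu=u$ is Andr\'e. Otherwise let $a$ denote the last letter of $v$, so $a=\max v<b_\ell$, and compare $\min v$ with $b_1$. If $\min v>b_1$, the smallest letter of $vu$ is $b_1$, sitting at the first position of the $u$-block; then $vu=v\,b_1\,(b_2\cdots b_\ell)$ with $v$ and $b_2\cdots b_\ell$ Andr\'e and $\max v=a<b_\ell=\max(b_2\cdots b_\ell)$ (note $\ell\ge2$ here, since $b_1<\min v\le a<b_\ell$), so $vu$ is Andr\'e. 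If $\min v<b_1$, the smallest letter of $vu$ is $\min v$, which lies in $v$; if $|v|=1$ then $vu=(\min v)\,u$ is Andr\'e because $u$ is Andr\'e, and if $|v|\ge2$ we use the defining decomposition $v=v'\,(\min v)\,v''$ with $v',v''$ Andr\'e and $\max v'<\max v''$: here $v''$ is nonempty, its last letter equals the last letter $a$ of $v$, so $\max v''=a<b_\ell$, the inductive hypothesis gives that $v''u$ is Andr\'e, and since $v'$ is Andr\'e and $\max v'<\max v''=a<b_\ell=\max(v''u)$, the word $vu=v'\,(\min v)\,(v''u)$ is Andr\'e. This proves the claim; applying it to $v=a_2\cdots a_k$, whose last letter is $a_k<b_\ell$, shows $w$ is Andr\'e, i.e.\ the merged cycle is an Andr\'e cycle.

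The main obstacle I anticipate is choosing this inductive formulation. Decomposing the merged cycle at its global minimum $a_1$ is useless since $a_1$ is already leftmost, and decomposing the word $w$ at its smallest letter produces pieces that are not cycles, so one cannot recurse on Lemma~\ref{lem:merging} directly; stating the claim for arbitrary Andr\'e words with the single hypothesis ``last letter $<b_\ell$'' is precisely what is preserved when we pass to the suffix $v''$ (which keeps both the property of being an Andr\'e word and the last letter). The remaining verifications — that $u$ and $b_2\cdots b_\ell$ are Andr\'e, that $v''$ is nonempty with last letter $a$, and the two comparisons of maxima — are routine unwindings of the definitions.
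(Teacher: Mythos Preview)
Your proof is correct and follows essentially the same approach as the paper: both induct on the length of the first factor, split into cases according to whether $b_1$ or $\min\{a_2,\dots,a_k\}$ is the smallest letter of the concatenated word, and in the latter case recurse on the suffix after that minimum. The only cosmetic difference is that you phrase the induction as a claim about Andr\'e words with the second factor $u$ fixed (so the recursive step is a single application of the inductive hypothesis followed by a direct check of the Andr\'e definition), whereas the paper keeps the cycle language and invokes the inductive hypothesis of the lemma twice, once for $(a_p,\dots,a_k)\ast(b_1,\dots,b_\ell)$ and once for $(a_1,\dots,a_{p-1})\ast(a_p,\dots,b_\ell)$.
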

\begin{proof}
  We induct on \( k \). First, consider the base case \( k=1 \).
  Since \( a_1 = \min\{a_1,b_1,\dots,b_\ell\} \), we only need to show that the
  permutation \( b_1\cdots b_k \) is an Andr\'e permutation.
  This follows immediately from the definition of Andr\'e permutations.
  
  We now suppose \( k\ge 2 \).
  Recall that the two permutations \( a_2\cdots a_k \) and \( b_2\cdots b_\ell \)
  are Andr\'e permutations. In addition, by Lemma~\ref{lem:cycle_minmax} and
  the assumption \( a_k< b_\ell \),
  we have \( \max\{a_2,\dots,a_k\}<\max\{b_2,\dots,b_\ell\} \).
  Thus, if \( b_1< \min\{a_2,\dots,a_k\} \),
  then the permutation \( a_2\cdots a_k b_1\cdots b_\ell \) is an Andr\'e permutation.
  Otherwise, let \( a_p=\min\{a_2,\dots,a_k\} \) for some \( p \), so that \( a_p < b_1 \)
  and both \( a_2\cdots a_{p-1} \) and \( a_{p+1}\cdots a_k \) are Andr\'e permutations.
  By the induction hypothesis, we have that
  the cycle \( (a_p,\dots,a_k,b_1,\dots,b_\ell) \) is an Andr\'e cycle.
  It is also clear that \( a_{p-1} < a_k < b_\ell \).
  Again, by the induction hypothesis,
  we deduce that the cycle \( (a_1,\dots,a_k, b_1,\dots,b_\ell) \) is an Andr\'e cycle, which yields the desired result.
\end{proof}

We now show another main result of the article,
which gives a characterization of web permutations.
\begin{thm} \label{thm:web=andre_cycle}
  A permutation \( \sigma \in \SYM_n \) is a web permutation
  if and only if each cycle of \( \sigma \) is an Andr\'e cycle.
\end{thm}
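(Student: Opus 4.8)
The plan is to understand the switching operation as an operation on cycle structure and then run two inductions, one for each implication, so that Lemma~\ref{lem:merging} and a converse to it become the engine. First I would record the cycle-theoretic content of switching: if $(i,j)$ is the maximal crossing of $G(\rho,E)$ being resolved and $\rho'$ is the result of switching, then $\rho'=\rho\circ(i\,i')$ with $i'=\rho^{-1}(j)$, and writing $j'=\rho(i)$ the crossing condition gives $i<i'$, $j'<j$, with $\rho$ sending $i\mapsto j'$, $i'\mapsto j$ and $\rho'$ sending $i\mapsto j$, $i'\mapsto j'$. In general $\rho\circ(i\,i')$ merges the $\rho$-cycles of $i$ and $i'$ when they are distinct and splits the $\rho$-cycle of $i$ when they coincide. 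The crucial claim — which I expect to be the main obstacle — is that resolving a \emph{maximal} crossing always produces a merge, and in fact that $i$ is the largest element of its $\rho$-cycle while $i'$ is the largest element of its $\rho$-cycle (equivalently $j=\rho(i')$ is the smallest element of that cycle). This is where grid geometry must be used: maximality of $(i,j)$ forbids crossings in its NW quadrant, and this has to be translated, through the strand picture and an invariant on the already-smoothed set $E$ (I expect $E$ to stay ``NW-closed inside $\Cr(\rho)$'' throughout the resolution), into the stated claim about the cycles of $i$ and of $\rho(i')$; along the way one uses that every permutation occurring in the resolution has all cycles Andr\'e, so Lemma~\ref{lem:cycle_minmax} applies.

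Granting the crucial claim, the forward implication ($\sigma\in\Web_n$ implies every cycle of $\sigma$ is an Andr\'e cycle) is an induction on the number of switchings in a resolving sequence producing $G(\sigma,\Cr(\sigma))$; the base case $\sigma=id$ holds because one-element cycles are Andr\'e cycles. In the inductive step, $\rho$ has all cycles Andr\'e and $\sigma=\rho\circ(i\,i')$ comes from switching at a maximal crossing $(i,j)$; by the crucial claim the cycle $C_1$ of $i$ satisfies $\max C_1=i$, $\min C_1=j'$, the cycle $C_2$ of $i'$ satisfies $\max C_2=i'$, $\min C_2=j$, these are distinct, and since $j'<j$ and $i<i'$, Lemma~\ref{lem:merging} shows the merged cycle is an Andr\'e cycle, whence every cycle of $\sigma$ is.

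For the converse I would first establish the converse of Lemma~\ref{lem:merging}: every Andr\'e cycle $C$ with $|C|\ge 2$ can be written as $(a_1,\dots,a_k,b_1,\dots,b_\ell)$ with $(a_1,\dots,a_k)$ and $(b_1,\dots,b_\ell)$ Andr\'e cycles, $\min\{a_p\}<\min\{b_q\}$, and $\max\{a_p\}<\max\{b_q\}$; this follows by splitting the Andr\'e permutation $a_2\cdots a_k$ at its smallest letter (where $a_1=\min C$) and checking ranges with Lemma~\ref{lem:cycle_minmax}. Then I would argue by downward induction on the reflection length $\ell(\sigma)=n-(\text{number of cycles of }\sigma)$. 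If $\sigma\neq id$ has all cycles Andr\'e, choose a nontrivial cycle and split it as above to obtain $\rho$ with the same remaining cycles; then $\rho$ has all cycles Andr\'e and $\ell(\rho)=\ell(\sigma)-1$, so $\rho\in\Web_n$ by induction. One checks that $\sigma=\rho\circ(\max C_1\,,\,\max C_2)$ and that $(\max C_1,\min C_2)$ is a crossing of $G(\rho,\Cr(\rho))$; using Proposition~\ref{prop:well-def} to pick a resolving order for $\rho$ that keeps this crossing maximal while every crossing above it is smoothed, a single switching produces $G(\sigma,E)$, and smoothing the remaining crossings yields $G(\sigma,\Cr(\sigma))$ in the expansion, so $\sigma\in\Web_n$. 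As in the forward direction, the delicate point is the bookkeeping guaranteeing the required crossing is maximal at the right moment, and I expect it to rest on the same ``NW-closed $E$'' invariant.
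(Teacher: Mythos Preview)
Your overall strategy matches the paper's: interpret switching as a merge of cycles, drive the forward direction with Lemma~\ref{lem:merging}, and for the converse split an Andr\'e cycle at its second-smallest element into two smaller Andr\'e cycles. The converse you outline is essentially the paper's argument (the paper phrases it as induction on the length of a single Andr\'e cycle rather than on reflection length, but the content is the same).

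The one genuine gap is exactly where you flag it: your ``crucial claim'' that at a maximal crossing $(i,j)$ one always has $i=\max C_1$ and $\rho^{-1}(j)=\max C_2$ (so that switching is a merge and Lemma~\ref{lem:merging} applies) is left unproved, and the ``NW-closed $E$'' invariant you propose is vaguer than what is needed. The paper sidesteps this difficulty by invoking Proposition~\ref{prop:well-def} at the outset: since the web permutations do not depend on the resolving order, one is free to fix the specific total order in which cells are processed row by row from the top, and left to right within each row. With this order there is a very clean invariant: at the moment one is processing row $j$, every non-minimum element of every cycle of the current permutation $\rho$ is strictly greater than $j$. This immediately gives $\rho(i)<j\Rightarrow \rho(i)=\min C_1$, hence $i=\max C_1$ by Lemma~\ref{lem:cycle_minmax}, and $j=\min C_2$, hence $\rho^{-1}(j)=\max C_2$; in particular $i$ and $j$ lie in distinct cycles, so switching is always a merge. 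Maintaining the invariant after a switch is a one-line check (the only new non-minimum created is $j$ itself, and the next row is $j-1$). For the converse the paper uses the same order to ensure the splitting crossing $(a_{p-1},a_p)$ is still available in $\Cr(\sigma)\setminus E$. So your plan is correct; the missing ingredient is simply to commit to this convenient total order rather than arguing for an arbitrary one.
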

\begin{proof}
  Recall that the web permutations
  do not depend on the order of choices of maximal crossings.
  Hence we fix the following total order on the cells in the \( n \) by \( n \) 
  grid, which completes the partial order,
  and we assume that our resolving process respects this total order:
  For two cells \( (i,j) \) and \( (i',j') \), we let \( (i,j)>(i',j') \)
  if either \( j>j' \), or \( j=j' \) and \( i<i' \).

  We first prove the ``only if'' part.
  Let \( \sigma \) be a web permutation, and
  \[ (G^{(0)}=G(id, \emptyset), G^{(1)}, \dots, G^{(r)}=G(\sigma, \Cr(\sigma))) \]
  be the sequence of grid configurations
  where \( G^{(k)} \) is obtained from \( G^{(k-1)} \) by resolving a single
  crossing for each \( k \), with respect to the total order.
  We write \( G^{(k)} = G(\sigma^{(k)}, E^{(k)}) \).
  Also let \( c^{(k)} \) be the crossing in \( \Cr(\sigma^{(k-1)})\setminus E^{(k-1)} \)
  such that \( G^{(k)} \) is obtained from \( G^{(k-1)} \) by resolving \( c^{(k)} \).

  It is obvious that the identity permutation \( \sigma^{(0)}=id \) consists of
  Andr\'e cycles. We claim that each \( \sigma^{(k)} \) also consists of Andr\'e cycles 
  for \( 1\le k\le r \), in particular, so does \( \sigma \).
  Fix an integer \( 1\le k\le r \).
  We use an inductive argument, so suppose that each cycle of \( \sigma^{(k-1)} \)
  is an Andr\'e cycle.
  If \( G^{(k)} \) is obtained by smoothing the crossing \( c^{(k)} \) in \( G^{(k-1)} \),
  then \( \sigma^{(k-1)}=\sigma^{(k)} \) and thus there is nothing to prove.
  Therefore, we assume that \( G^{(k)} \) is obtained from \( G^{(k-1)} \)
  by switching the crossing \( c^{(k)}=(i,j) \).
  Then 
  \begin{equation} \label{eq:ij_condition}
    \sigma^{(k-1)}(i)<j \qand i<(\sigma^{(k-1)})^{-1}(j).
  \end{equation}
  Let \( C_1, \dots, C_\ell \) be cycles of \( \sigma^{(k-1)} \).
  We first observe that for each \( 1\le p\le \ell \), all entries in \( C_p \)
  except the minimum \( \min C_p \) are greater than \( j \).
  We justify this observation later.
  From this, we have that \( i \) and \( j \) are contained in different cycles
  of \( \sigma^{(k-1)} \). Indeed, if \( i \) and \( j \) lie on the same cycle,
  then \( \sigma^{(k-1)}(i) \) also lies on the cycle, but it is a contradiction to
  \eqref{eq:ij_condition}.
  Without loss of generality, let \( C_1=(a_1,\dots,a_s) \) and
  \( C_2=(b_1,\dots,b_t) \) contain \( i \) and \( j \) respectively
  with \( a_1=\min C_1 \) and \( b_1 = \min C_2 \).
  By the first inequality of \eqref{eq:ij_condition} and the observation,
  \( a_1 = \sigma^{(k-1)}(i) \) and \( b_1 = j \), so \( a_s = i \) and
  \( b_t = (\sigma^{(k-1)})^{-1}(j) \).
  By definition, resolving the crossing \( (i,j) \) by switching merges two cycles \( C_1 \) and
  \( C_2 \) into the cycle \( C=(a_1,\dots,a_s,b_1,\dots,b_t) \),
  and leaves other cycles unchanged.
  It therefore follows from Lemma~\ref{lem:merging} and \eqref{eq:ij_condition}
  that the cycle \( C \) is an Andr\'e cycle.
  Note that \( \min C = a_1 = \sigma^{(k-1)}(i) < j \), and there is no crossing on
  row \( j \) in the grid configuration \( G^{(k)} \).
  Hence the crossing \( c^{(k+1)} \) lies below row \( j \),
  which implies the observation inductively.

  We now prove the ``if'' part. 
  It suffices to show that we obtain any Andr\'e cycle by iterating resolving
  processes to the identity permutation along the total order.
  We induct on the length of an Andr\'e cycle where the base case being trivial.
  Suppose that \( C=(a_1,\dots,a_k) \) is an Andr\'e cycle with \( k\ge 2 \) and
  \( a_1=\min C \). Then by definition, the permutation \( a_2\cdots a_k \) is
  an Andr\'e permutation. Let \( a_p = \min \{a_2,\dots,a_k\} \) for some
  \( p \), so \( a_2\cdots a_{p-1} \) and \( a_{p+1}\cdots a_k \) are also
  Andr\'e permutations. Thus, the cycles \( (a_1,\dots,a_{p-1}) \) and
  \( (a_p, \dots,a_k) \) are Andr\'e cycles.
  By the induction hypothesis, we can obtain the web permutation
  \( \sigma=(a_1,\dots,a_{p-1})(a_p,\dots,a_k) \) by resolving processes.
  More precisely, we can obtain the grid configuration \( G(\sigma, E) \)
  such that for \( 1\le i\le n \) and \( j\le a_p \), \( (i,j)\notin E \).
  Furthermore, one can easily check that the cell \( (a_{p-1}, a_p) \) belongs to
  \( \Cr(\sigma) \), so \( (a_{p-1}, a_p)\in\Cr(\sigma)\setminus E \).
  We then obtain the desired cycle \( C \) by switching the crossing
  \( (a_{p-1}, a_p) \) in the grid configuration \( G(\sigma, E) \),
  which completes the proof.
\end{proof}

As an application of the characterization, we show that the transition matrix
\( (a_{MM'}) \) is unitriangular with respect to a certain order on \( \NN_{2n} \)
and \( \NC_{2n} \), and determine which entries \( a_{MM'} \) vanish.
These are already known due to Russell--Tymoczko~\cite{RT19} and
Im--Zhu~\cite{IZ21}.

Before we give the vanishing condition,
we first show that the set \( \Web_n \) includes a well-studied class of permutations.
For a permutation \( \sigma = \sigma_1\cdots \sigma_n \),
we say that \emph{\( \sigma \) contains a 312-pattern} if there exist three indices
\( 1\le i<j<k\le n \) such that \( \sigma_j<\sigma_k<\sigma_i \).
A permutation is \emph{312-avoiding} if it does not contain a 312-pattern.
Note that 312-avoiding permutations are a Catalan object. Furthermore,
the restriction of \( D:\SYM_n\rightarrow \Dyck_{2n} \) to the set of
312-avoiding permutations of \( [n] \) is a bijection.
\begin{cor}\label{cor:312-avoiding}
  A 312-avoiding permutation is a web permutation.
\end{cor}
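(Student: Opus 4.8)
The plan is to use the characterization of web permutations from Theorem~\ref{thm:web=andre_cycle}: it suffices to show that every cycle of a 312-avoiding permutation $\sigma$ is an Andr\'e cycle. So let $\sigma$ be 312-avoiding, let $C=(a_1,\dots,a_k)$ be one of its cycles with $a_1=\min C$, and set $w = a_2\cdots a_k$; I want to prove that $w$ is an Andr\'e permutation, arguing by induction on $k$. The base cases $k\le 2$ are immediate from the definition (the empty word and one-letter words are Andr\'e permutations). For the inductive step, write $a_p = \min\{a_2,\dots,a_k\}$, so that $w = w' \, a_p \, w''$ with $w' = a_2\cdots a_{p-1}$ and $w'' = a_{p+1}\cdots a_k$; the definition of Andr\'e permutation then requires two things: that $w'$ and $w''$ are themselves Andr\'e permutations, and that $\max\{a_2,\dots,a_{p-1}\} < \max\{a_{p+1},\dots,a_k\}$ (with the convention that an empty max is $-\infty$).

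The first point is where I expect the main work, and I would extract it from the 312-avoidance hypothesis applied to the cycle structure. The key observation is that the blocks of entries of $w$ lying to the left versus to the right of the positional minimum $a_p$ cannot interleave badly: concretely, I want to show that $w'$ and $w''$ are each (the one-line notations of) single Andr\'e cycles after a suitable relabeling, or more directly, that each is order-isomorphic to the word read off a shorter cycle of a 312-avoiding permutation. The cleanest route is probably this: since $a_1 = \sigma(a_k)$ and $a_1 < $ everything else in $C$, the positions and values in the cycle are constrained; if $w'$ or $w''$ contained a 312-pattern internally, one could lift it (possibly using $a_1$ or $a_p$ as a witness) to a 312-pattern of $\sigma$ in one-line notation, contradicting the hypothesis. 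I would make this precise by carefully tracking, for $i$ and $\sigma(i)$ both appearing among $\{a_2,\dots,a_k\}$, how their relative positions in the one-line notation of $\sigma$ compare — the cycle $(a_1,\dots,a_k)$ with $a_1$ minimal forces $a_j$ and $a_{j+1}=\sigma(a_j)$ into a predictable left/right pattern that rules out 312.

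For the inequality $\max(w') < \max(w'')$: the entry $a_k = \sigma^{-1}(a_1)$ sits at the end of $w''$ (when $w''$ is nonempty), and I expect $a_k = \max C$ — indeed this is exactly Lemma~\ref{lem:cycle_minmax} applied once we know $w$ is Andr\'e, but to avoid circularity I would instead prove directly from 312-avoidance that the largest element of the cycle occurs in the $w''$-part; otherwise $\max C$ would lie in $w'$, and together with $a_1$ (playing the role of the "1") and $a_p$ or some right-hand entry (playing the role of the "2") we would again manufacture a 312-pattern in $\sigma$. Once the max of the cycle is known to live in $w''$, the required inequality is automatic. The main obstacle, to reiterate, is the bookkeeping in the inductive step: translating "no 312-pattern in the one-line notation of $\sigma$" into structural statements about a single cycle written with its minimum first. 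I would handle this by setting up once and for all the correspondence between an ascent/descent of the cycle word $a_1 a_2 \cdots a_k a_1$ and the relative position of $i,\sigma(i)$ in one-line notation, and then invoking it uniformly; with that lemma in hand the corollary drops out of Theorem~\ref{thm:web=andre_cycle} and an induction.
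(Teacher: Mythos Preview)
Your overall strategy --- reduce to Theorem~\ref{thm:web=andre_cycle} and analyze the cycle structure --- is exactly right, and matches the paper. The difficulty is in your inductive step. To recurse on $w'=a_2\cdots a_{p-1}$ and $w''=a_{p+1}\cdots a_k$ you need them to again be cycle-words read off a 312-avoiding permutation, but neither $(a_1,\dots,a_{p-1})$ nor $(a_p,\dots,a_k)$ is a cycle of $\sigma$; the permutation obtained by splitting $C$ into those two cycles differs from $\sigma$ at positions $a_{p-1}$ and $a_k$, and you do not verify that it remains 312-avoiding. Your fallback (``if $w'$ or $w''$ contained a 312-pattern internally one could lift it to $\sigma$'') conflates two different things: failing to be an Andr\'e permutation is a recursive condition, not a pattern condition, so ``$w'$ is not Andr\'e'' does not hand you a 312-subword of $w'$ to lift.

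The paper sidesteps this by running the contrapositive: if a cycle $C$ of $\sigma$ is not Andr\'e, exhibit a 312-pattern in $\sigma$ using only entries of $C$. Writing $C=(a_1,\dots,a_\ell,b_1,\dots,b_r)$ with $a_1=\min C$ and $b_1$ the second smallest element, exactly one of three things fails --- $C_1=(a_1,\dots,a_\ell)$ is not Andr\'e, $C_2=(b_1,\dots,b_r)$ is not Andr\'e, or both are Andr\'e but $\max C_1>\max C_2$ --- and in each case one writes down the 312-pattern explicitly (inductively in the first two cases; in the third, the positions $a_1<b_r<a_\ell$ carry the values $a_2,a_1,b_1$ with $a_1<b_1<a_2$). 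The point is that the contrapositive induction hypothesis produces a concrete triple of positions among the $a_i$, and one can then check this pattern survives the single-position discrepancy between $\sigma$ and the sub-cycle permutation (because the changed value is $b_1$, the second smallest, which cannot spoil the relative order). Your direct formulation never produces such a concrete object to transport, which is why the induction stalls.
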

\begin{proof}
  By Theorem~\ref{thm:web=andre_cycle}, it suffices to show the following: 
  For a permutation \( \sigma \), if there is a cycle \(C=(a_1,\dots,a_\ell)\) which is not an Andr\'e cycle in \(\sigma\), then \(\sigma\) contains a 312-pattern which consists of $a_i$'s.

  We use induction on the length of \(C\).
  Since any cycle of length less than 3 is an Andr\'e cycle, the base case is when the length of \(C\) is 3. 
  The only case is of the form \((a_1,a_2,a_3)\), where \(a_1<a_3<a_2\). 
  Thus, \(\sigma\) contains a 312-pattern \(a_1<a_3<a_2\).

  Now assume that the length of \(C\) is larger than 3.
  Write \(C=(a_1,a_2,\dots,a_\ell, b_1,b_2,\dots,b_r)\), where \( a_1 \) and \( b_1 \) is the smallest and the second smallest elements of
  \( C \), respectively.
  Then one of the following holds:
  \begin{enumerate}[label=\roman*)]
    \item \(C_1=(a_1,a_2,\dots,a_\ell)\) is not an Andr\'e cycle.
    \item \(C_2=(b_1,b_2,\dots,b_r)\) is not an Andr\'e cycle.
    \item both \(C_1\) and \(C_2\) are Andr\'e cycles and $\max C_1=a_\ell>b_{r}=\max C_2$.
  \end{enumerate}
  
  For the first case, by the induction hypothesis, 
  there exist integers $0\le i,j,k\le\ell$ such that \(a_i<a_j<a_k\) and \(a_{j+1}<a_{k+1}<a_{i+1}\) where the subscripts are interpreted modulo $\ell$.
  Note that \(\sigma_{a_i}=a_{i+1}\) except for \(\sigma_{a_\ell}=b_1\). 
  Since $b_1$ is the second smallest element, replacing $a_1$ with $b_1$ does not change the pattern of \({a_{i+1}}{a_{j+1}}{a_{k+1}}\).
  Therefore, \(\sigma\) contains a 312-pattern as we claimed.
  The second case can be proved similarly to the first case.
  For the last case, $a_1<b_r<a_\ell$ forms a 312-pattern. 
  Indeed, $\sigma_{a_1}=a_2, \sigma_{b_r}=a_1, \sigma_{a_\ell}=b_1$ and $a_1<b_1<a_2$.
\end{proof}

Recall that the set \( \Dyck_{2n} \) has a partial order \( \subseteq \),
and there are bijections \( D \) from \( \NN_{2n} \) and from \( \NC_{2n} \) to
\( \Dyck_{2n} \).
Then the maps \( D \) induce a partial order on \( \NN_{2n} \) and \( \NC_{2n} \).
Furthermore, when we choose a total order on \( \Dyck_{2n} \) that completes
the partial order \( \subseteq \), the maps \( D \) give a total order
on \( \NN_{2n} \) and \( \NC_{2n} \).

\begin{rmk}
  In \cite{RT19}, Russell and Tymoczko defined a directed graph \( \Gamma \) on
  \( \NC_{2n} \), and defined a partial order on \( \NC_{2n} \) using the digraph.
  The graph \( \Gamma \) is an edge-labeled directed graph whose vertex set is
  the set of noncrossing matchings and its labeled edges are given as follows.
  For \( M,M'\in\NC_{2n} \), assign a labeled, directed edge $M \xrightarrow{i} M'$ if both of the following hold:
  \begin{enumerate}[label=\roman*)]
    \item $M$ has arcs $\{j,k\}$ and $\{i,i+1\}$ while $M'$ has arcs $\{j,i\}$ and
    $\{i+1,k\}$ where \( j<i<k \).
    \item Other arcs in $M$ and $M'$ are the same.
  \end{enumerate}
  The graph $\Gamma$ defines a partial order on $\NC_{2n}$ by letting
  \(M\preceq M'\) if there is a directed path from $M'$ to $M$ in \(\Gamma\).
  Russell and Tymoczko also defined a partial order on \(\NN_{2n}\)
  via a well-known bijection between \( \NN_{2n} \) and \( \NC_{2n} \).
  It is straightforward to see that their partial order on \(\NC_{2n}\)
  and \(\NN_{2n}\) coincides with ours.
\end{rmk}

We now take a total order on \( \Dyck_{2n} \) which completes the partial order
\( \subseteq \), and thus we have the induced total order on
\( \NN_{2n} \) and \( \NC_{2n} \).
We assume that orderings of rows and columns of the transition matrix
\( (a_{MM'}) \) are the decreasing orders with respect to the total order
on \( \NN_{2n} \) and \( \NC_{2n} \).
Then the entry \( a_{MM'} \) is on the diagonal if and only if \( D(M)=D(M') \).

We are now ready to prove the unitriangularity of the transition matrix
$(a_{MM'})$ and the conjecture of Russell and Tymoczko
\cite[Conjecture 5.8]{RT19} concerning the condition of the vanishing entries,
which is later proved by Im and Zhu \cite[Theorem 1.1]{IZ21}.

\begin{cor}\label{Cor: unitriangularity}\cite{RT19,IZ21}
  Let \( M\in\NN_{2n} \) and \( M'\in\NC_{2n} \).
  Then \( a_{MM'} > 0 \) if and only if \( D(M') \subseteq D(M) \).
  In particular, the transition matrix $(a_{MM'})$ is upper-triangular.
  Moreover, there are ones along the diagonal of the transition matrix,
  and 312-avoiding permutations contribute to the ones.
\end{cor}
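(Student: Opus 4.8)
The plan is to deduce the statement from Theorem~\ref{thm:main_intro}, Corollary~\ref{cor:312-avoiding}, and Rhoades's identity $a_{MM'}=c_{MM'}$ (with $c_{MM'}$ the coefficients in \eqref{eq: web expansion}), together with one lemma describing the image under $M(\cdot)$ of a $312$-avoiding permutation.

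First I would record how one step of the resolution \eqref{Eq: web relation} affects the Dyck path. If a crossing pair $\{a,c\},\{b,d\}$ of a matching $N$, with $a<b<c<d$, is resolved into the matching $N'$ having the arcs $\{a,b\},\{c,d\}$ and the matching $N''$ having the arcs $\{a,d\},\{b,c\}$, then $N''$ has the same openers and closers as $N$, so $D(N'')=D(N)$, while $N'$ turns the opener at position $b$ into a closer and the closer at position $c$ into an opener, so $D(N')\subsetneq D(N)$. By induction on the number of crossing pairs it follows that, for any matching $N$, every matching appearing in the iterated crossing-resolution of $N$ has Dyck path contained in $D(N)$; in particular $c_{N,M''}\neq 0$ forces $D(M'')\subseteq D(N)$. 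Taking $N=M$ and using $a_{MM'}=c_{MM'}$ gives ``$a_{MM'}>0\implies D(M')\subseteq D(M)$'', which, as the rows and columns are listed in decreasing order of $D$, is exactly the upper-triangularity. Next, for $d\in\Dyck_{2n}$ let $M'_d\in\NC_{2n}$ be the noncrossing matching with $D(M'_d)=d$; I would prove $c_{N,M'_d}=1$ for \emph{every} matching $N$ with $D(N)=d$ by a second induction on the number of crossing pairs of $N$: if there are none then $N=M'_d$, and otherwise, resolving one pair into $N',N''$ as above, $c_{N',M'_d}=0$ since $D(N')\subsetneq d$, while $c_{N'',M'_d}=1$ by induction since $D(N'')=d$ with fewer crossing pairs. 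With $N=M$ this shows the diagonal entries all equal $1$.

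The essential new input is the Key Lemma: for a $312$-avoiding permutation $\tau$, $M(\tau)$ is the noncrossing matching with Dyck path $D(\tau)$; equivalently $D(M(\tau))=D(\tau)$, since $M(\tau)$ is noncrossing (all crossings of $\tau$ are smoothed in $G(\tau,\Cr(\tau))$) and $D$ is injective on $\NC_{2n}$. I would prove it using two ingredients. \emph{(i)}~For any $\sigma$, every cell above $D(\sigma)$ is a crossing of $\sigma$, and $\Cr(\sigma)$ equals the set of cells above \emph{some} Dyck path exactly when $\sigma$ is $312$-avoiding, in which case that Dyck path is $D(\sigma)$: if $\Cr(\sigma)$ were the set of cells above a Dyck path and $\sigma$ had a $312$-pattern $p<q<r$ with $\sigma_q<\sigma_r<\sigma_p$, then $(q,\sigma_r)$ would be a crossing, hence in that region, hence so would $(p,\sigma_r)$ (the region is upward closed in each column, with weakly increasing column heights), making $(p,\sigma_r)$ a crossing and contradicting $\sigma_p>\sigma_r$; conversely for $312$-avoiding $\sigma$ no crossing lies weakly below $D(\sigma)$, because a crossing $(i,j)$ with $j$ at most the height of $D(\sigma)$ in column $i$ gives a $312$-pattern via a column $i'\le i$ attaining that height. \emph{(ii)}~In the grid model, \emph{switching} realizes the Dyck-path-preserving resolution $N\mapsto N''$, and \emph{smoothing} realizes $N\mapsto N'$. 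Granting these, set $d=D(\tau)$, let $E(d)$ be the set of cells above $d$, and resolve \emph{every} crossing of $G(id,E(d))$ by switching; the smoothed set stays equal to $E(d)$, so the fully resolved configuration reached is $G(\sigma_*,\Cr(\sigma_*))$ with $\Cr(\sigma_*)=E(d)$. Since $M(id,E(d))$ is the nonnesting matching with Dyck path $d$ (recall $M(id,E(M))=M$ for nonnesting $M$) and every switching preserves the Dyck path of the associated matching, $D(M(\sigma_*))=d$. By (i), $\sigma_*$ is $312$-avoiding with $D(\sigma_*)=d$, hence $\sigma_*$ is \emph{the} $312$-avoiding permutation with Dyck path $d$, namely $\tau$; therefore $M(\tau)=M(\sigma_*)$ is the noncrossing matching with Dyck path $d$.

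Finally I would assemble the ``if'' direction and the last assertion. Given $D(M')\subseteq D(M)$, let $\sigma_0$ be the $312$-avoiding permutation with $D(\sigma_0)=D(M')$; by Corollary~\ref{cor:312-avoiding} $\sigma_0\in\Web_n$, one has $D(\sigma_0)=D(M')\subseteq D(M)$, and by the Key Lemma $M(\sigma_0)=M'$, so Theorem~\ref{thm:main_intro} yields $a_{MM'}\ge 1$. With the first paragraph this gives $a_{MM'}>0\iff D(M')\subseteq D(M)$; and when $D(M')=D(M)$, the value $a_{MM'}=1$ established above is contributed by the $312$-avoiding permutation $\sigma_0$. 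The main obstacle is the Key Lemma, and within it ingredient~(ii): establishing that switching---rather than smoothing---preserves the Dyck path, which needs a careful look at the local picture at a maximal crossing; the rest of (i) and the two inductions are routine.
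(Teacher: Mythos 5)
Your argument is correct in outline and takes a genuinely different route from the paper's for most of the statement. For upper-triangularity and the diagonal entries you work entirely at the level of matchings and the coefficients $c_{MM'}$: the observation that the resolution $N\mapsto N''$ preserves openers and closers while $N\mapsto N'$ strictly lowers the Dyck path, followed by your induction on the number of crossing pairs showing $c_{N,M'_d}=1$ for every $N$ with $D(N)=d$, is a clean, self-contained argument that never mentions web permutations. The paper instead routes everything through Theorem~\ref{thm:main_intro}: the ``only if'' direction is obtained by comparing $\sigma_i<j$ (forced by $\sigma\in\Web_M$) with $\sigma_i=j$ (forced by $M(\sigma)=M'$) at a maximal cell of $E(M)\setminus E(M')$, the diagonal is handled by showing any contributing $\sigma$ satisfies $\Cr(\sigma)=E(\sigma)$ and hence is $312$-avoiding, and the ``if'' direction is the one-line monotonicity $a_{MM'}\ge a_{M''M'}=1$ with $M''$ the nonnesting matching having $D(M'')=D(M')$. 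Your version of ``if'' instead builds an explicit witness via the Key Lemma $D(M(\tau))=D(\tau)$ for $312$-avoiding $\tau$; you could have borrowed the paper's monotonicity shortcut and reserved the Key Lemma solely for the final clause about $312$-avoiding permutations.

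The one soft spot is exactly the one you flag: ingredient~(ii), that at a maximal crossing smoothing realizes the Dyck-path-lowering resolution $N\mapsto N'$ and switching the Dyck-path-preserving one $N\mapsto N''$. This is true, but it is a genuine strand-tracing argument (one must check that the two strand-ends emanating left and up from a maximal crossing terminate on the boundary ``before'' the two emanating right and down, using that no unresolved crossing lies in the upper-left quadrant), and your proposal does not supply it. You are in good company: the paper's own proof silently uses the same fact in the step $D(M(\sigma,E(\sigma)\cup\{c\}))\subsetneq D(M(\sigma,E(\sigma)))$, so this is not a defect relative to the paper, but it is the one lemma you would actually have to write out. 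Everything else --- ingredient~(i), the two inductions, and the identification of $\sigma_*$ with the unique $312$-avoiding permutation having $\Cr(\sigma_*)=E(d)$ --- checks out. (As a caution, do not calibrate against the appendix tables: several entries in the $D(\sigma)$ column there, e.g.\ for $\sigma=231$, are inconsistent with the definition of $D(\sigma)$ and with the displayed transition matrices; your Key Lemma is consistent with the corrected values.)
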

\begin{proof}
  Recall that by the argument in the proof of Theorem~\ref{thm:main_intro}, we have
  \[
      a_{MM'}=\{\sigma\in\Web_M:M(\sigma)=M'\}.
  \]
  We first show that there are ones along the diagonal, i.e.,
  \( a_{MM'}=1 \) if \( D(M)=D(M') \).
  Let $\sigma$ be a permutation in $\Web_M$ satisfying $M(\sigma)=M'$.
  Denote the set of cells above the Dyck path $D(\sigma)$ by $E(\sigma)$.
  We claim that $E(\sigma)=\Cr(\sigma)$.
  Since \(E(\sigma)\subseteq \Cr(\sigma)\) is obvious, suppose that we have $E(\sigma)\subsetneq\Cr(\sigma)$, and let $c$ be a maximal crossing in \( \Cr(\sigma)\setminus E(\sigma) \).
  Note that \[D(M(\sigma,E(\sigma)\cup \{c\}))\subsetneq D(M(\sigma,E(\sigma)))=D(M).\]
  Thus, if we resolve all crossings as smoothing to obtain $G(\sigma,\Cr(\sigma))$, 
  the associated Dyck path $D(M')=D(M(\sigma))$ lies strictly below $D(M)$ 
  which is a contradiction.

  It is well known that 312-avoiding permutations are only permutations satisfying
  the condition \( \Cr(\sigma)=E(\sigma) \) and the map $D:\Web_n\rightarrow \Dyck_{2n}$ is a bijection
  when restricted to 312-avoiding permutations (see \cite[§1.2]{Sta12}).
  Here, the restriction makes sense by Corollary~\ref{cor:312-avoiding}.
  Combining these facts, it follows that each 312-avoiding permutation represents 
  each one on the diagonal in the transition matrix.

  To show the ``only if'' part of the first assertion, assume that $D(M') \nsubseteq D(M)$.
  Then there exists a cell below the Dyck path $D(M')$ and above the Dyck path
  $D(M)$, i.e., \( E(M)\setminus E(M')\neq\emptyset. \)
  Choose a maximal cell $c=(i,j)$ in \( E(M)\setminus E(M')\neq\emptyset. \)
  Let $\sigma$ be a web permutation. 
  If $\sigma$ is in $\Web_M$, then we have $c\in\Cr(\sigma)$, thus $\sigma_i<j$. 
  On the other hand, if $M(\sigma)=M'$, then we have $\sigma_i=j$, which is a contradiction. 
  Therefore, we have \( a_{MM'}=0 \).

  For the ``if'' part, let $M''$ be the nonnesting matching such that $D(M'')=D(M')$. Then we have
  \begin{align*}
    a_{MM'} 
    &= |\{\sigma \in \Web_n: D(\sigma)\subseteq D(M), M(\sigma)=M'\}| \\
    &\ge |\{\sigma \in \Web_n: D(\sigma)\subseteq D(M')=D(M''), M(\sigma)=M'\}| \\
    &= a_{M''M'}=1.
  \end{align*}
  This completes the proof.
\end{proof}

\section{Enumeration of web permutations}
\label{sec:enumeration}
In this section, we focus on the number of web permutations. 
More precisely, we give a relation between web permutations and
Andr\'e cycles~(Theorem~\ref{thm:webn=Andren+2}),
and show that the numbers of web permutations equal Euler numbers.
We also conjecture that the Seidel triangle can be recovered completely from
the certain classes of web permutations.

We have characterized web permutations using Andr\'e cycles (Theorem~\ref{thm:web=andre_cycle}).
We now present another relationship between web permutations and Andr\'e cycles.
Let us first review the Foata transformation \( \widehat{~~}:\SYM_n\rightarrow\SYM_n \).
For a permutation $\sigma\in\SYM_n$, the \emph{canonical} cycle notation
of \( \sigma \) is a cycle notation of \( \sigma \) such that its cycles are
sorted based on the smallest elements of the cycles and
the smallest element of each cycle is written in the last place of the cycle.
We define \( \widehat{\sigma} \) to be the permutation obtained by dropping 
the parentheses in the canonical cycle notation of \( \sigma \).
A \emph{right-to-left minimum} is an element \( \sigma_i \) such that
\( \sigma_i < \sigma_j \) for all \( j>i \).
Using right-to-left minima of \( \sigma \), one can easily construct the inverse
of the Foata transformation.
Note that the number of cycles of \( \sigma \) equals the number of right-to-left minima of \( \widehat{\sigma} \).

We now introduce a map $\phi : \SYM_n \rightarrow \SYM_{n+2}$ as
a slightly modification of the Foata transformation.
For a permutation $\sigma \in \SYM_n$,
define the one-cycle permutation $\phi(\sigma) \in\SYM_{n+2}$ by
\[
  \phi(\sigma):=(1,\widehat{\sigma}_1 +1, \dots, \widehat{\sigma}_n+1, n+2).
\]
It follows immediately from the bijectivity of the Foata transformation
that the map \( \phi \) is injective, and its image \( \phi(\SYM_n) \) is the
set of one-cycle permutations \( \sigma\in\SYM_{n+2} \) with \( \sigma(n+2)=1 \).
For instance, let \( \sigma=568479312\in\SYM_9 \). In the canonical cycle notation,
\( \sigma=(5,7,3,8,1)(6,9,2)(4) \), so \( \widehat{\sigma}=573816924 \).
Then we have \[ \phi(\sigma) = (1,6,8,4,9,2,7,10,3,5,11)\in\SYM_{11}. \]
The right-to-left minima of \( \widehat{\sigma} \) are \( 1,2 \) and \( 4 \),
which are the minima of cycles of \( \sigma \).
Note that the permutation \( \sigma \) is a web permutation,
and the cycle \( \phi(\sigma) \) is an Andr\'e cycle.
Surprisingly, this is not an accident.
\begin{thm}\label{thm:webn=Andren+2}
  For \( n\ge 1 \), let \( \AC_{n+2}\subset\SYM_{n+2} \) be the set of
  Andr\'e cycles consisting of \( [n+2] \).
  Then we have \( \phi(\Web_n) = \AC_{n+2} \).
  In particular, the number of web permutations of \( [n] \) is equal to
  the number of Andr\'e cycles consisting of \( [n+2] \).
\end{thm}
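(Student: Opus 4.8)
The plan is to establish the set equality $\phi(\Web_n) = \AC_{n+2}$ by combining the cycle-structure characterization of web permutations (Theorem~\ref{thm:web=andre_cycle}) with the combinatorics of the Foata transformation that underlies $\phi$. The key observation is that $\phi$ sends a permutation $\sigma$, viewed through its canonical cycle notation, to a single long cycle $(1,\widehat\sigma_1+1,\dots,\widehat\sigma_n+1,n+2)$, so the question becomes: precisely which one-cycle permutations in $\SYM_{n+2}$ fixing $n+2\mapsto 1$ are Andr\'e cycles, and do these correspond under $\phi^{-1}$ exactly to the permutations all of whose cycles are Andr\'e cycles. Since $\phi$ is already known to be a bijection onto the set of one-cycle permutations $\tau\in\SYM_{n+2}$ with $\tau(n+2)=1$, it suffices to prove that for $\sigma\in\SYM_n$, \emph{every cycle of $\sigma$ is an Andr\'e cycle} if and only if \emph{$\phi(\sigma)$ is an Andr\'e cycle}.

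First I would unwind the definitions. Writing $\phi(\sigma)=(1,\widehat\sigma_1+1,\dots,\widehat\sigma_n+1,n+2)$ with minimum $1$ in front, Definition~\ref{def:Andre cycle} says $\phi(\sigma)$ is an Andr\'e cycle iff the word $(\widehat\sigma_1+1)(\widehat\sigma_2+1)\cdots(\widehat\sigma_n+1)(n+2)$ is an Andr\'e permutation. Because $n+2$ is the largest letter and sits at the end, and because appending the global maximum at the end of a word preserves the Andr\'e property (check the recursive condition at the position of the smallest letter — the right block only grows its max), this reduces to: $\widehat\sigma$ (shifted up by $1$, which is order-preserving and hence irrelevant to the Andr\'e property) is an Andr\'e permutation. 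So the theorem is equivalent to the clean statement: $\sigma$ has all cycles Andr\'e cycles $\iff$ $\widehat\sigma$ is an Andr\'e permutation, where $\widehat{\phantom{\sigma}}$ is the Foata map sending $\sigma$ to the word obtained by erasing parentheses from the canonical cycle notation.

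Next I would prove this equivalence by induction on $n$, exploiting the recursive structure of both sides. In the canonical cycle notation, the cycle containing the global minimum $1$ is written \emph{last}, ending in $1$; say it is $(c_1,\dots,c_{m-1},1)$, and the remaining cycles, on the letter set $\{2,\dots,n\}\setminus\{c_1,\dots,c_{m-1}\}$, form the prefix of $\widehat\sigma$. Thus $\widehat\sigma = u\, c_1 \cdots c_{m-1}\, 1$, where $u$ is the Foata word of the permutation on the complementary letters. The Andr\'e recursion applied to $\widehat\sigma$ at the smallest letter $1$ (which is at the very end, so the right block is empty) forces the max condition $\max(u\,c_1\cdots c_{m-1}) < \max(\emptyset)$ to be read correctly — here I must be careful with the convention for the empty word's maximum — and demands that $u\,c_1\cdots c_{m-1}$ itself be Andr\'e. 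Iterating, this says exactly that, for each cycle, after rotating its minimum to the end, the preceding letters form an Andr\'e permutation, \emph{and} that when the cycles are concatenated in canonical order the successive maxima are suitably ordered — which is precisely the content of Lemma~\ref{lem:merging} run in reverse, together with Lemma~\ref{lem:cycle_minmax}. The forward direction ("all cycles Andr\'e $\Rightarrow$ $\widehat\sigma$ Andr\'e") is then an induction using Lemma~\ref{lem:merging} to glue; the reverse direction splits an Andr\'e word at its minimum letter and matches the pieces back to individual cycles.

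The main obstacle I anticipate is bookkeeping around the Foata canonical form and the max-conditions: one must verify that concatenating the canonical cycle words in increasing order of cycle-minima produces a word whose decomposition at each successive minimum exactly mirrors the recursive Andr\'e test, and in particular that the inequality $a_k < b_\ell$ hypothesis of Lemma~\ref{lem:merging} (comparing the \emph{last} entries, i.e.\ the maxima by Lemma~\ref{lem:cycle_minmax}) is equivalent to the $\max$-inequality appearing in the Andr\'e recursion. Edge cases — singleton cycles (one-letter Andr\'e words, always fine) and the empty word's max convention (treating $\max\emptyset = -\infty$ so the recursion behaves) — need to be handled explicitly but are routine. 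Once the equivalence $\sigma \text{ all-Andr\'e-cycles} \iff \widehat\sigma \text{ Andr\'e}$ is in hand, Theorem~\ref{thm:web=andre_cycle} converts the left side into "$\sigma\in\Web_n$", the shift-and-append argument converts the right side into "$\phi(\sigma)\in\AC_{n+2}$", and the bijectivity of $\phi$ onto one-cycle permutations with $\tau(n+2)=1$ (which contains every Andr\'e cycle on $[n+2]$, since by Lemma~\ref{lem:cycle_minmax} an Andr\'e cycle with minimum $1$ in front ends in its maximum $n+2$, i.e.\ $\tau(n+2)=1$) yields $\phi(\Web_n)=\AC_{n+2}$, and the cardinality statement follows immediately.
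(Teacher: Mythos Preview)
There is a genuine gap in your reduction step. You claim that ``appending the global maximum at the end of a word preserves the Andr\'e property'' and use this as an \emph{equivalence} to strip off the final letter $n+2$, reducing the question to whether $\widehat\sigma$ itself is an Andr\'e permutation. But this is only an implication, not a biconditional: if $w$ is Andr\'e then $w\cdot(\max+1)$ is Andr\'e, yet the converse fails. Concretely, $21$ is not an Andr\'e permutation (the minimum sits at the end, so the right block is empty and the max-inequality cannot hold), whereas $213$ \emph{is} Andr\'e. Thus your ``clean statement'' --- that $\sigma$ has all cycles Andr\'e if and only if $\widehat\sigma$ is Andr\'e --- is false already for $\sigma=21=(1,2)\in\SYM_2$: this is a web permutation (its unique cycle $(1,2)$ is an Andr\'e cycle), yet $\widehat\sigma=21$ is not an Andr\'e permutation. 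Correspondingly, $\phi(21)=(1,3,2,4)$ \emph{is} an Andr\'e cycle, as it should be, because the word $324$ is Andr\'e even though $32$ is not.

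There is also a bookkeeping slip: in the paper's canonical cycle notation the cycles are listed in \emph{increasing} order of their minima with each minimum written last in its cycle, so the cycle containing $1$ comes \emph{first} and the letter $1$ sits in the interior of $\widehat\sigma$, not at the very end (see the worked example $\widehat\sigma=573816924$). This matters because your recursive analysis at the position of $1$ then has a nonempty right block, and the appended $n+2$ lives there --- which is exactly what makes the max-inequality automatic at every stage of the induction. The fix is to \emph{keep} the appended maximum throughout and prove directly, by induction on the number of cycles, that the word $\widehat\sigma\cdot(n+1)$ is Andr\'e if and only if every cycle of $\sigma$ is an Andr\'e cycle; splitting at the global minimum $1$ peels off the first cycle and the appended maximum guarantees the inequality $\max(\text{left})<\max(\text{right})$ at each step. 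This is essentially what the paper does.
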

\begin{proof}
  Let $\sigma$ be a web permutation of \( [n] \).
  In the canonical cycle notation, we write 
  \[
    \sigma=C_1 C_2 \cdots C_k 
  \]
  where \( C_i = (c^{(i)}_{1}, \dots, c^{(i)}_{n_i}) \) is a cycle with $\min C_i = c^{(i)}_{n_i}$ for each \( i=1,\dots,k \), and $1=c^{(1)}_{n_1}<\cdots<c^{(k)}_{n_k}$.
  Note that by Theorem~\ref{thm:web=andre_cycle}, each cycle \( C_i \) is an
  Andr\'e cycle, that is, each word \( c^{(i)}_1\cdots c^{(i)}_{n_i-1} \) is
  an Andr\'e permutation.
  We claim that the word
  \[
    c^{(1)}_1\cdots c^{(1)}_{n_1} c^{(2)}_{1}\cdots c^{(2)}_{n_2}\cdots c^{(k)}_{n_k} (n+1)
  \] obtained by appending \( n+1 \) to the end of \( \widehat{\sigma} \)
  is an Andr\'e permutation.
  Since \( c^{(1)}_{n_1} \) is the minimum in the word, and
  \( c^{(1)}_1\cdots c^{(1)}_{n_1-1} \) is an Andr\'e permutation,
  it suffices to show that the suffix \( c^{(2)}_{1}\cdots c^{(2)}_{n_2}\cdots
  c^{(k)}_{n_k} (n+1) \) is an Andr\'e permutation.
  Then an appropriate inductive argument shows the claim.
  We now consider the cycle \( \phi(\sigma) \).
  Using the canonical cycle notation of \( \sigma \), we have
  \[
    \phi(\sigma)
    = (1,c^{(1)}_1+1,\dots, c^{(1)}_{n_1}+1,c^{(2)}_1 +1,\dots,c^{(2)}_{n_2}+1,\dots,c^{(k)}_1+1 ,\dots, c^{(k)}_{n_k}+1,n+2 ).
  \]
  Thus, by the claim, \(\phi(\sigma)\) is an Andr\'e permutation of \( [n+2] \),
  as desired.

  Conversely, let \( \tau \) be an Andr\'e cycle of \( [n+2] \).
  One can directly check from the definition of Andr\'e cycles that
  \( \tau \) forms \( (1,a_1,\dots,a_n, n+2) \).
  Let \( a_{n_1},\dots,a_{n_k} \) be the right-to-left minima of the permutation
  \( a_1\cdots a_n \) with \( n_1<\dots<n_k \) so that \( a_{n_1}<\dots<a_{n_k} \).
  Then we only need to show that the permutation
  \( (a_1,\dots,a_{n_1})(a_{n_1+1},\dots,a_{n_2})\cdots (a_{n_{k-1}+1},a_{n_k}) \)
  is a web permutation, or equivalently, due to Theorem~\ref{thm:web=andre_cycle}
  each cycle \( (a_{n_{i-1}+1},\dots,a_{n_i}) \) is an Andr\'e cycle.
  It is easily verified by a similar argument as in the previous claim and
  using the right-to-left minima.
  Hence we leave the details to the reader.
\end{proof}

\subsection{Euler and Entringer numbers}
In this subsection, we give various enumerative properties of
web permutations using Theorem~\ref{thm:webn=Andren+2}.

We start with recalling Euler numbers.
The \emph{Euler numbers} \( E_n \) are defined via the exponential
generation function
\[
  E(z) := \sum_{n\ge 0} E_n \frac{x^n}{n!} = \sec z + \tan z.
\]
The first few Euler numbers are 1, 1, 1, 2, 5, 16, 61;
see \cite{OEIS} with ID number A000111.
There are numerous combinatorial objects enumerated by Euler numbers \( E_n \),
e.g., alternating permutations, complete increasing binary trees, and etc.
Especially, the Euler number \( E_n \) counts Andr\'e permutations of \( [n] \).
For details, we refer to \cite{Sta10}, which is a wonderful survey of Euler numbers and related topics.
We provide another occurrence of Euler numbers.

\begin{cor}
  The Euler number \( E_{n+1} \) enumerates
  the number of web permutations of \( [n] \).
\end{cor}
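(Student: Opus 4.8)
The plan is to combine Theorem~\ref{thm:webn=Andren+2} with the classical fact, recalled just above, that the Euler number $E_m$ counts Andr\'e permutations of $[m]$. By Theorem~\ref{thm:webn=Andren+2}, we have $\phi(\Web_n) = \AC_{n+2}$, and since $\phi$ is injective, this gives $|\Web_n| = |\AC_{n+2}|$, the number of Andr\'e cycles consisting of $[n+2]$. So the only thing left to prove is that $|\AC_{n+2}| = E_{n+1}$, i.e., that Andr\'e cycles on $[n+2]$ are equinumerous with Andr\'e permutations of $[n+1]$.

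First I would set up the bijection $\AC_{n+2} \to \{\text{Andr\'e permutations of } [n+1]\}$ suggested by the structural description already obtained in the proof of Theorem~\ref{thm:webn=Andren+2}. There it is observed that every Andr\'e cycle $\tau$ on $[n+2]$ has the form $\tau = (1, a_1, \dots, a_n, n+2)$, where $a_1 \cdots a_n$ is a permutation of $\{2, \dots, n+1\}$, and moreover that the word $a_1 \cdots a_n\,(n+2)$ — equivalently, after the order-preserving relabeling $\{2,\dots,n+2\} \to \{1,\dots,n+1\}$ sending $x \mapsto x-1$, the word $(a_1-1)\cdots(a_n-1)\,(n+1)$ — is an Andr\'e permutation of $[n+1]$. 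Conversely, given an Andr\'e permutation $w = w_1 \cdots w_{n+1}$ of $[n+1]$, Lemma~\ref{lem:cycle_minmax} (applied to the one-letter-shifted cycle) forces $w_{n+1} = n+1$, so $w = u\,(n+1)$ with $u$ an Andr\'e permutation of $\{1,\dots,n\}$; relabeling $u$ by $x \mapsto x+1$ and wrapping it as $(1, u_1+1, \dots, u_n+1, n+2)$ produces an Andr\'e cycle on $[n+2]$. These two assignments are mutually inverse, so the map is a bijection.

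Then I would conclude: $|\Web_n| = |\AC_{n+2}| = |\{\text{Andr\'e permutations of }[n+1]\}| = E_{n+1}$, which is exactly the assertion of the corollary. I expect no serious obstacle here — the corollary is essentially a repackaging of Theorem~\ref{thm:webn=Andren+2} together with the known enumeration of Andr\'e permutations by Euler numbers. The only point requiring a little care is the bijection $\AC_{n+2} \leftrightarrow \{\text{Andr\'e permutations of }[n+1]\}$, namely verifying that ``strip off the forced endpoints $1$ and $n+2$, shift down by one, and append the new maximum'' genuinely lands in (and surjects onto) the Andr\'e permutations of $[n+1]$; both directions follow directly from the recursive definition of Andr\'e permutation and from Lemma~\ref{lem:cycle_minmax}, so the verification is routine. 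Alternatively, one can phrase the whole thing even more economically by noting that $\phi$ composed with the relabeling is a bijection from $\Web_n$ onto Andr\'e permutations of $[n+1]$ with last letter $n+1$ prepended-and-wrapped — but the cleanest exposition is the two-step one above.
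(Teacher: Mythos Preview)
Your approach is essentially the paper's: the paper's proof is the one-liner ``by Definition~\ref{def:Andre cycle}, $|\AC_{m}|$ equals the number of Andr\'e permutations on an $(m-1)$-set, so $|\Web_n|=|\AC_{n+2}|=E_{n+1}$,'' and your argument just unpacks that ``by definition'' bijection explicitly.

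One correction, though it does not affect the validity of the proof. Your claim that $u=w_1\cdots w_n$ is itself an Andr\'e permutation of $\{1,\dots,n\}$ is false: for instance $w=3124$ is Andr\'e, but $u=312$ is not (the max condition $3<2$ fails). Stripping the maximum letter from the end of an Andr\'e word need not yield an Andr\'e word. Fortunately you never use this claim; to see that $(1,u_1+1,\dots,u_n+1,n+2)$ is an Andr\'e cycle you only need $(u_1+1)\cdots(u_n+1)(n+2)$ to be Andr\'e, and that word is exactly $w$ shifted up by $1$. So simply delete the phrase ``with $u$ an Andr\'e permutation of $\{1,\dots,n\}$'' and your argument is clean.
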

\begin{proof}
  By definition, the number of Andr\'e permutations of \( [n] \) is equal to
  the number of Andr\'e cycles of \( [n+1] \).
  Then Theorem~\ref{thm:webn=Andren+2} implies the desired result.
\end{proof}
\begin{rmk}
  One can prove the corollary without using the fact that the number of Andr\'e
  permutations is equal to the Euler number.
  Indeed, let \( w_n \) be the number of web permutations of \( [n] \),
  \( ac_n \) the number of Andr\'e cycles of \( [n] \), and
  \[
    W(z) = \sum_{n\ge 0} w_n \frac{z^n}{n!}, \qand
    AC(z) = \sum_{n\ge 1} ac_n \frac{z^n}{n!},
  \]
  where we set \( w_0 = 1 \).
  Then by a standard fact of generating functionology \cite[Corollary~5.1.6]{Sta99} and Theorem~\ref{thm:web=andre_cycle},
  we have
  \[
    W(z) = \exp AC(z).
  \]
  Meanwhile, Theorem~\ref{thm:webn=Andren+2} gives the ODE
  \[
    W(z) = \frac{d^2}{dz^2} AC(z) = \frac{d^2}{dz^2} \log W(z)
  \]
  whose unique solution is \( W(z) = \sec z \tan z + \sec^2 z =  E'(z) \), which implies \( w_n = E_{n+1} \).
\end{rmk}

For a permutation \( \sigma \), let \( \cc(\sigma) \) be the number of cycles of
\( \sigma \), and \( \rlmin(\sigma) \) the number of right-to-left minima of
\( \sigma \).
By convention, we set \( \cc(\emptyset) = 0 \) where \( \emptyset \) is
the empty permutation, and \( \Web_0=\{\emptyset\}. \)
Since the Foata transformation gives the equidistribution of the two statistics
\( \cc(\sigma) \) and \( \rlmin(\sigma) \), we have the following corollary
concerning the distribution of \( \cc(\sigma) \) on \( \Web_n \).
\begin{cor} 
  We have
  \begin{align*}
    \left( \frac{1}{1-\sin z} \right)^t =
      \sum_{n\ge 0} \sum_{\sigma\in \Web_n} t^{\cc(\sigma)} \frac{z^{n}}{n!}.
  \end{align*}
\end{cor}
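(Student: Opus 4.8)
The plan is to combine the two structural facts already established: the characterization of web permutations (Theorem~\ref{thm:web=andre_cycle}) together with the standard exponential formula, and the equidistribution of \( \cc \) and \( \rlmin \) coming from the Foata transformation. By Theorem~\ref{thm:web=andre_cycle}, a permutation is a web permutation if and only if each of its cycles is an Andr\'e cycle, so a web permutation on \( [n] \) is precisely a set partition of \( [n] \) into blocks, each of which is endowed with the structure of an Andr\'e cycle. Using the notation \( AC(z) = \sum_{n\ge 1} ac_n z^n/n! \) from the remark above, the exponential formula \cite[Corollary~5.1.6]{Sta99} immediately gives the refined identity
\[
  \sum_{n\ge 0} \sum_{\sigma\in\Web_n} t^{\cc(\sigma)} \frac{z^n}{n!} = \exp\bigl( t\, AC(z) \bigr),
\]
since marking each cycle by \( t \) corresponds to marking each connected component by \( t \) in the exponential formula, and \( \cc(\sigma) \) counts cycles.

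Next I would identify the generating function \( AC(z) \). From the remark, \( W(z) = \exp AC(z) \) and \( W(z) = \frac{d^2}{dz^2} AC(z) \), whose unique solution (with \( w_0 = 1 \)) is \( W(z) = \sec^2 z + \sec z \tan z \). Integrating \( \log W(z) = AC(z) \), or directly solving, yields \( AC(z) = \log W(z) = -\log(1 - \sin z) \); one checks this by differentiating twice and confirming it satisfies the ODE with the correct initial conditions \( ac_0 = 0 \), \( ac_1 = 1 \). Substituting into the refined exponential formula gives
\[
  \sum_{n\ge 0} \sum_{\sigma\in\Web_n} t^{\cc(\sigma)} \frac{z^n}{n!} = \exp\bigl( -t \log(1-\sin z) \bigr) = \left( \frac{1}{1-\sin z} \right)^t,
\]
which is exactly the claimed identity.

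Alternatively, and perhaps more in the spirit of the stated corollary, I would route through the Foata transformation: since it restricts to a bijection realizing \( \phi \) and preserves the relevant statistics, \( \cc \) on \( \Web_n \) is equidistributed with \( \rlmin \) on the image, and more to the point one can track \( t^{\cc(\sigma)} \) directly. Either way, the computation reduces to knowing \( AC(z) = -\log(1-\sin z) \). The only real obstacle is pinning down this closed form for \( AC(z) \) rigorously — i.e., verifying that \( -\log(1-\sin z) \) solves \( f'' = \exp f \) with \( f(0) = 0 \), \( f'(0) = 1 \) — but this is a short calculus verification: writing \( g = 1 - \sin z \), one has \( (\log W)'' = (AC)'' = W \) reduces to checking \( \bigl( -\log g \bigr)'' = 1/g \), i.e. \( (g'/g)' \cdot(-1)= \bigl( (\cos z)/(1-\sin z) \bigr)' = 1/(1-\sin z) \), which follows from a direct differentiation. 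With that in hand the corollary is immediate.
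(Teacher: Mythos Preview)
Your proof is correct but takes a genuinely different route from the paper's. The paper's argument is essentially a two-line citation: it invokes Disanto's result \cite[Proposition~1]{Dis13},
\[
\left(\frac{1}{1-\sin z}\right)^t = \sum_{n\ge 1}\sum_{\tau} t^{\rlmin(\tau)-1}\,\frac{z^{n-1}}{(n-1)!}
\]
(the inner sum over Andr\'e permutations of \( [n] \)), and then applies Theorem~\ref{thm:webn=Andren+2}, the bijection \( \phi \) between \( \Web_n \) and Andr\'e cycles on \( [n+2] \) (equivalently Andr\'e permutations on \( [n+1] \)), under which \( \cc(\sigma) \) corresponds to \( \rlmin(\tau)-1 \). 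Your approach instead goes through Theorem~\ref{thm:web=andre_cycle} and the exponential formula to obtain \( \exp\bigl(t\,AC(z)\bigr) \), and then pins down \( AC(z)=-\log(1-\sin z) \) by solving the ODE from the Remark. This is more self-contained --- it never leaves the paper to import Disanto's identity --- and makes the cycle-by-cycle structure explicit; the paper's version is terser but outsources the analytic content. Both arguments encode the same combinatorics (decomposition into Andr\'e cycles plus the Foata-type equidistribution of \( \cc \) and \( \rlmin \)), just with different bookkeeping.
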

\begin{proof} 
  In \cite[Proposition~1]{Dis13}, the author showed that 
  \begin{align*}
    \left( \frac{1}{1-\sin z} \right)^t = \sum_{n\ge 1}\sum_{\sigma}
    t^{\rlmin(\sigma)-1} \frac{z^{n-1}}{(n-1)! }
  \end{align*}
  where the inner sum is over all Andr\'e permutations of \( [n] \).
  Therefore the proof follows immediately from Theorem~\ref{thm:webn=Andren+2}.
\end{proof}

We also recall Entringer numbers.
The \emph{Entringer} numbers are given by
the generating function
\[
  \frac{\cos x + \sin x}{\cos (x+y)} = \sum_{m,n \ge 0} E_{m+n,[m,n]} \frac{x^m}{m!} \frac{y^n}{n!},
\]
where \( [m,n] \) is \( m \) if \( m+n \) is odd, and \( n \) otherwise.
These numbers refine Euler numbers in the following sense: For \( n\ge 1 \),
\[
  \sum_{k=1}^n E_{n,k} = E_{n+1}.
\]
We have a counterpart of this refinement.
\begin{cor} 
  The Entringer number \( E_{n,k} \) is equal to the number of web permutations \( \sigma \) of \( [n] \) with \( \sigma_1 = n+1-k \).
\end{cor}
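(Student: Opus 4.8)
The plan is to lift the classical bijective correspondence between Entringer numbers and André permutations through the map $\phi$ of Theorem~\ref{thm:webn=Andren+2}, keeping track of the value $\widehat{\sigma}_1$ along the way. First I would recall (from the literature cited just above, e.g.\ \cite{Sta10}) the standard fact that $E_{n,k}$ counts André permutations $w = w_1\cdots w_n$ of $[n]$ with $w_1 = k$; equivalently, by reversing the alphabet, $E_{n,k}$ counts André permutations with first letter $n+1-k$ or last letter some prescribed value, depending on which normalization one takes. The cleanest route is to pin down the exact statement: $E_{n,k}$ is the number of André permutations of $[n]$ whose first entry equals $k$ (this matches the generating function displayed, with the parity convention $[m,n]$). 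I would state this as the input fact and reference it.

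Next I would unwind what $\sigma_1 = n+1-k$ means for a web permutation $\sigma \in \Web_n$ under $\phi$. Recall $\phi(\sigma) = (1,\widehat{\sigma}_1+1,\dots,\widehat{\sigma}_n+1,n+2)$, and by Theorem~\ref{thm:webn=Andren+2} this is an André cycle, so the word $\widehat{\sigma}_1\cdots\widehat{\sigma}_n (n+1)$ — equivalently the André permutation $a_1\cdots a_n := (\widehat{\sigma}_1+1)\cdots(\widehat{\sigma}_n+1)$ shifted back, together with its relation to $[n+2]$ — is what the count $E_{n+1,\,\cdot}$ sees. The key point is the interaction between $\phi$ and the first entry: I need to relate $\sigma_1$ (one-line notation of the web permutation) to $\widehat{\sigma}_1$ (first entry after dropping parentheses from the canonical cycle notation). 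In the canonical cycle notation, the first cycle is the one containing $1$, written with $1$ last; so $\widehat{\sigma}_1$ is the \emph{first} entry of that cycle, which is $\sigma^{-1}(1)$ if $1$ is not a fixed point, and is $1$ itself otherwise. Hence the statement ``$\sigma_1 = n+1-k$'' must be reconciled with a statement about $\widehat{\sigma}_1$; the honest move here is to check whether the corollary should read $\sigma_1$ or $\widehat{\sigma}_1$, and if it is genuinely $\sigma_1$ in one-line notation, to prove the missing combinatorial lemma: among web permutations, the distribution of $\sigma_1$ equals the distribution of $\widehat{\sigma}_1$. I would prove this by exhibiting an explicit involution or bijection on $\Web_n$ (or by a direct generating-function identity) that swaps these two statistics, using Theorem~\ref{thm:web=andre_cycle} to stay within $\Web_n$.

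Granting that reconciliation, the argument closes quickly: web permutations $\sigma$ of $[n]$ with the prescribed first-entry statistic equal to $v$ biject, via $\phi$ composed with the shift-down-by-one map, with André permutations $a_1\cdots a_{n+1}$ of $[n+1]$ having $a_1 = v'$ for the correspondingly transformed value $v'$, and the count of the latter is exactly the Entringer number $E_{n+1,\,k}$ — wait, one must be careful with the index shift: $\phi$ produces an André \emph{cycle} of $[n+2]$, whose associated André \emph{permutation} (dropping the fixed endpoints $1$ and $n+2$, per Definition~\ref{def:Andre cycle} and Lemma~\ref{lem:cycle_minmax}) lives on $n$ letters, not $n+1$; so the relevant count is $E_{n,\cdot}$ directly, not $E_{n+1,\cdot}$, once one tracks that the second letter of the $[n+2]$-cycle is $\widehat\sigma_1+1$ and the corresponding André permutation on the middle $n$ letters has first entry $\widehat\sigma_1+1$ relative to the alphabet $\{2,\dots,n+1\}$, i.e.\ first entry $\widehat\sigma_1$ after renormalizing to $[n]$. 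Reversing or complementing the alphabet to match the $n+1-k$ convention then gives $E_{n,k}$. I would verify the small cases $n=1,2,3$ against the table $1,1,1,2,\dots$ and against $\Web_3 = \{123,213,132,231,321\}$ to fix all conventions.

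The main obstacle I anticipate is precisely the bookkeeping around the statistic: disentangling $\sigma_1$ versus $\widehat{\sigma}_1$ versus the first non-fixed entry of the $1$-cycle, and getting the alphabet reversal (the $n+1-k$) to land on the right parity branch of the Entringer recursion. If it turns out the intended statistic is $\widehat{\sigma}_1$, the proof is a one-line consequence of Theorem~\ref{thm:webn=Andren+2} plus the known Entringer-number interpretation of André permutations; if it is genuinely $\sigma_1$ in one-line notation, the real content is the equidistribution lemma between $\sigma_1$ and $\widehat{\sigma}_1$ on $\Web_n$, which I would prove either combinatorially by a sign-reversing-free involution or analytically by refining the ODE $W(z) = \exp AC(z)$ from the remark above with a variable marking the first letter.
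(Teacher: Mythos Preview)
Your overall approach is exactly the paper's: cite the known interpretation of $E_{n,k}$ in terms of Andr\'e permutations and then push it through the bijection $\phi$ of Theorem~\ref{thm:webn=Andren+2}. The paper does this in two lines, citing \cite[Theorem~1.1]{FH16} for the fact that $E_{n,k}$ equals the number of Andr\'e permutations $\tau$ of $[n+1]$ with $\tau_1 = n+1-k$, and then invoking Theorem~\ref{thm:webn=Andren+2}.

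Your difficulties are self-inflicted by two bookkeeping slips. First, in the canonical cycle notation the cycle containing $1$ is written $(\sigma(1),\sigma^2(1),\dots,\sigma^{-1}(1),1)$, so $\widehat{\sigma}_1 = \sigma(1) = \sigma_1$, \emph{not} $\sigma^{-1}(1)$; you can verify this against the paper's own worked example $\sigma = 568479312$, where $\widehat{\sigma} = 573816924$ and indeed $\widehat{\sigma}_1 = 5 = \sigma_1$. Thus there is no ``equidistribution lemma'' to prove: the two statistics are literally the same statistic. Second, the Andr\'e permutation associated to an Andr\'e cycle $(a_1,\dots,a_k)$ with $a_1=\min$ is $a_2\cdots a_k$, of length $k-1$; you drop only the minimum, not the maximum. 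Hence $\phi(\sigma) = (1,\widehat{\sigma}_1+1,\dots,\widehat{\sigma}_n+1,n+2)$ yields the Andr\'e permutation $(\widehat{\sigma}_1+1)\cdots(\widehat{\sigma}_n+1)(n+2)$ on $\{2,\dots,n+2\}$, i.e.\ after subtracting $1$ the Andr\'e permutation $\widehat{\sigma}_1\cdots\widehat{\sigma}_n(n+1)$ on $[n+1]$, with first entry $\widehat{\sigma}_1 = \sigma_1$. Matching this against the \cite{FH16} count on $[n+1]$ gives the corollary immediately; no involution, no ODE refinement, and no alphabet reversal beyond what is already encoded in the $n+1-k$ of the cited result.
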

\begin{proof} 
  In \cite[Theorem~1.1]{FH16}, the authors showed that \( E_{n,k} \) equals the
  number of Andr\'e permutations \( \sigma \) of \( [n+1] \) with
  \( \sigma_1 = n+1-k \).
  Combining this fact and Theorem~\ref{thm:webn=Andren+2} gives the proof.
\end{proof}

\subsection{Genocchi numbers and the Seidel triangle}
The Genocchi numbers are well-studied numbers with various combinatorial properties; see \cite{Dum74, LW20}. The Genocchi numbers can be defined
by the \emph{Seidel triangle} as follows \cite{Sei77}.
Recall that the Seidel triangle is an array of integers $(s_{i,j})_{i,j\ge1}$
such that $s_{1,1}=s_{2,1}=1$ and
\[
  \begin{cases}
    s_{2i+1,j} = s_{2i+1,j-1} + s_{2i,j} \mbox{ for } j=1,2,\dots, i+1\\
    s_{2i,j} = s_{2i,j+1} + s_{2i-1,j} \mbox{ for } j=i,i-1,\dots,1,
  \end{cases}
\]
where $s_{i,j}=0$ for $j<0$ or $j>\lceil i/2\rceil$.
This Pascal type procedure is called the \emph{boustrophedon algorithm}.
The \emph{Genocchi numbers} $g_n$ are defined by
\[
  g_{2n-1}=s_{2n-1,n} \qand g_{2n}=s_{2n,1}.
\]
In fact, the sequence $(g_n)$ is the interleaving of the Genocchi numbers of
the first kind and the median Genocchi numbers.
The first values of the Seidel triangle and Genocchi numbers (in red)
are given in the following sequence. 

\begin{center}
  \begin{tabular}{c|ccccc}
    \( n\backslash k \) & 1 & 2 & 3 & 4 & 5 \\ \hline
    1 & \textcolor{red}{1} & & & & \\
    2 & \textcolor{red}{1} & & & & \\
    3 & 1 & \textcolor{red}{1} & & & \\
    4 & \textcolor{red}{2} & 1 & & & \\
    5 & 2 & 3 & \textcolor{red}{3} & & \\
    6 & \textcolor{red}{8} & 6 & 3 & & \\ 
    7 & 8 & 14 & 17 & \textcolor{red}{17} & \\
    8 & \textcolor{red}{56} & 48 & 34 & 17 &\\
    9 & 56 & 104 & 138 & 155 & \textcolor{red}{155}\\
  \end{tabular}
\end{center}

Recall that we denote by \( M_0 \) for the unique matching which
is simultaneously noncrossing and nonnesting, i.e.,
\( M_0 = \{ \{1,2\},\dots,\{2n-1,2n\} \} \).
To emphasize the size of the matchings, we denote this unique matching of $[2n]$
by $M^{(n)}_0$. Let $f(n)$ be the number of web permutations $\sigma$ of $[n]$
with $M(\sigma)=M^{(n)}_0$.
In \cite{Nak20}, Nakamigawa showed the following theorem.
\begin{thm}[{\cite[Theorem 3.1]{Nak20}}] \label{thm:Geno}
  For \( n\ge 1 \), we have \( f(n)=g_n \).
\end{thm}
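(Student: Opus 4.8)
The plan is to establish a bijection between the web permutations $\sigma$ of $[n]$ with $M(\sigma) = M^{(n)}_0$ and a family of combinatorial objects that Nakamigawa (or the Seidel triangle literature) already counts by $g_n$. First I would unwind what the condition $M(\sigma) = M^{(n)}_0$ means at the level of grid configurations: the matching $M(\sigma) = M(\sigma, \Cr(\sigma))$ equals $\{\{1,2\},\dots,\{2n-1,2n\}\}$ exactly when, in the fully smoothed grid configuration $G(\sigma,\Cr(\sigma))$, the strand entering the leftmost vertical interval labelled $i$ exits at the interval labelled $i+1$ for odd $i$ — equivalently, the boundary pairing induced by the elbow diagram is the ``nearest-neighbour'' pairing. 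I would translate this into a condition purely on the permutation $\sigma$ and its crossing set, and then, via Theorem~\ref{thm:web=andre_cycle}, into a condition on the cycle structure (each cycle an Andr\'e cycle) together with the boundary-pairing constraint. The combination should pin down a recursively described subclass of Andr\'e permutations.

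Next I would set up the recursion. Let $f(n)$ be as defined. Using the characterization via Andr\'e cycles and the map $\phi$ from Theorem~\ref{thm:webn=Andren+2}, the condition $M(\sigma) = M^{(n)}_0$ should correspond to a tractable constraint on the Andr\'e cycle $\phi(\sigma) \in \AC_{n+2}$ — plausibly a statement about where certain small values sit, or about a pattern-avoidance/descent condition. I would then derive a two-variable recurrence for a refinement $f(n,k)$ of $f(n)$ (say, refined by the value $\sigma_1$, or by the position of $1$ in some canonical word, mirroring the Entringer-type refinements appearing earlier in Section~\ref{sec:enumeration}). The goal is to show this refined array satisfies the boustrophedon-type recurrence defining the Seidel triangle $(s_{i,j})$, with the correct initial conditions $s_{1,1}=s_{2,1}=1$, and that the relevant boundary values of the array are $f(n)$; then $f(n) = g_n$ follows from the definition $g_{2n-1} = s_{2n-1,n}$, $g_{2n}=s_{2n,1}$.

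An alternative route, which I would pursue if the direct recurrence is unwieldy, is to exhibit an explicit bijection to one of Nakamigawa's models: his Theorem~3.1 is stated in terms of certain chord diagrams / resolutions, and since our grid-configuration resolution process is essentially the same crossing-resolution process that produces $\Delta_M$ expansions, there should be a dictionary between ``sequences of resolutions of $G(id,\emptyset)$ terminating at the grid configuration whose matching is $M^{(n)}_0$'' and whatever Nakamigawa counts. Under this reading the identity $f(n)=g_n$ would be almost a restatement of~\cite[Theorem 3.1]{Nak20} once the models are matched up, and the work is purely in the translation. I would present whichever of the two is cleaner and relegate the other to a remark.

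The main obstacle I anticipate is the first translation step: extracting from ``$M(\sigma,\Cr(\sigma)) = M^{(n)}_0$'' a clean, local condition on $\sigma$. The boundary pairing produced by a fully-smoothed elbow configuration is a genuinely global readout of the permutation (it depends on how all the elbows route strands across the grid), so showing that ``every strand from interval $i$ goes to interval $i+1$'' is equivalent to a manageable cycle-theoretic or pattern condition will require a careful induction on $n$, peeling off the cell in row $1$ or column $1$ and tracking how the nearest-neighbour pairing constrains the rest of the configuration. Once that lemma is in hand, matching the resulting recursion to the boustrophedon algorithm should be routine bookkeeping, and verifying the base cases $f(1)=g_1=1$, $f(2)=g_2=1$, $f(3)=g_3=1$ against the data in the excerpt's table is immediate.
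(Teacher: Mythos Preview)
The paper does not give its own proof of this statement: it is quoted verbatim as Nakamigawa's result and simply cited. The implicit ``translation'' is the one you identify in your alternative route. By definition, $f(n)$ counts the web permutations $\sigma\in\Web_n$ with $M(\sigma)=M_0^{(n)}$; but $\Web_n=\Web_M$ for $M=\{\{1,n+1\},\dots,\{n,2n\}\}$, so by the argument proving Theorem~\ref{thm:main_intro} (and Rhoades's identification $a_{MM'}=c_{MM'}$), $f(n)$ is exactly the coefficient of $\Delta_{M_0^{(n)}}$ in the crossing-resolution expansion of $\Delta_M$. That is precisely the quantity Nakamigawa computes in \cite{Nak20}, whose ``expansion of a chord diagram'' is the same resolution process~\eqref{Eq: web relation}. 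So there is no further argument in the paper beyond this dictionary, and your second paragraph's caveat that the identity ``would be almost a restatement of~\cite[Theorem 3.1]{Nak20} once the models are matched up'' is exactly right.

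Your primary plan, by contrast, aims at something the paper does \emph{not} do: proving that the refinement $f(n,k)$ satisfies the Seidel boustrophedon recurrence is stated as Conjecture~\ref{Conj: Genocchi} and left open. If you can carry that out you would be proving strictly more than Theorem~\ref{thm:Geno}. Note, however, that the obstacle you flag is real and is essentially why the conjecture is open here: the condition $M(\sigma,\Cr(\sigma))=M_0^{(n)}$ is a global routing constraint on the elbow diagram, and the paper extracts only the easy local consequences (Propositions~\ref{prop:web-Genocchi even k=0} and~\ref{prop:web-Genocchi odd n,n=0}), not a full recursive description.
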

Let $f(n,k)$ be the number of web permutations $\sigma$ of $[n]$ such that
\( M(\sigma) = M^{(n)}_0 \) and $\sigma_1=k$.
Obviously, \( f(n)=\sum_{1\le k\le n} f(n,k) \).
Some of these numbers vanish in the following cases.
\begin{prop}\label{prop:web-Genocchi even k=0}
  For $n\ge1$ and $1\le k \le \lfloor n/2\rfloor$, we have $f(n,2k)=0$.  
\end{prop}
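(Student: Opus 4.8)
The plan is to analyze which web permutations $\sigma$ of $[n]$ satisfy $M(\sigma) = M_0^{(n)}$, and to show that none of them can have $\sigma_1 = 2k$ for $k \le \lfloor n/2\rfloor$. The first step is to translate the condition $M(\sigma) = M_0^{(n)}$ into a combinatorial condition on $\sigma$. Recall that $M_0^{(n)}$ is the matching with arcs $\{2i-1, 2i\}$ in the $[2n]$-labeling of the boundary intervals, which corresponds to the minimum Dyck path $\NS\ES\NS\ES\cdots\NS\ES$. By the proof of Corollary~\ref{Cor: unitriangularity} (the ``if'' part, or rather by the general setup of Theorem~\ref{thm:main_intro}), a web permutation $\sigma$ with $M(\sigma) = M_0^{(n)}$ must have $D(\sigma) \subseteq D(M_0^{(n)})$; but $D(M_0^{(n)})$ is the minimum element of $\Dyck_{2n}$, so $D(\sigma)$ equals the minimum Dyck path. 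This forces every marked cell $(i,\sigma(i))$ to lie on or below the staircase $\NS\ES\NS\ES\cdots$, which is exactly the condition $\sigma(i) \le i$ for all $i$ — but a permutation satisfying $\sigma(i)\le i$ for all $i$ is the identity. That is too restrictive, so the correct reading is: $D(\sigma)$ is the minimum path, meaning each cell $(i,\sigma(i))$ lies weakly below it, i.e. $\sigma(i) \le i$... I should instead recall that the relevant constraint from $M(\sigma)=M_0^{(n)}$ is on the full grid configuration $G(\sigma,\Cr(\sigma))$ and its associated matching after resolving, not merely on $D(\sigma)$; so the first real step is to pin down precisely how the strands of $G(\sigma,\Cr(\sigma))$ connect boundary intervals and to characterize when every strand joins interval $2i-1$ to $2i$.

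Once that characterization is in hand, I would argue by contradiction: suppose $\sigma \in \Web_n$ with $M(\sigma) = M_0^{(n)}$ and $\sigma_1 = 2k$ with $1 \le k \le \lfloor n/2\rfloor$. The key is to trace the strand in the grid configuration $G(\sigma, \Cr(\sigma))$ that enters through the boundary interval corresponding to the first column of marked cells. Because $\sigma_1 = 2k$, the cell $(1, 2k)$ is marked; the strand structure near $x=1$ (a smoothed elbow at every crossing $(1,j)$ for $j < 2k$, i.e. every cell strictly above the marked cell in column $1$, since $D(\sigma)$ being the minimum path forces all cells above the staircase to be smoothed) routes the boundary label on the left side of the grid to a specific boundary label on the top. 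I would compute explicitly which pair of boundary intervals gets joined by following the strand through column $1$: the left-boundary interval $1$ (at height $\le 1$) is connected, via the elbows, to a top interval whose index has a definite parity determined by $2k$. The point is that $2k$ being even will force this strand to connect intervals of the ``wrong'' parity — i.e. it produces an arc $\{a,b\}$ in $M(\sigma)$ with $b \ne a+1$ or with $a$ even, contradicting $M(\sigma) = M_0^{(n)}$.

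The heart of the argument — and the main obstacle — is the bookkeeping of strand routing through the grid configuration near the first column and relating it cleanly to the parity of $\sigma_1$. Concretely, I expect to need a small lemma of the form: \emph{if $\sigma \in \Web_n$ and $M(\sigma) = M_0^{(n)}$, then in the canonical cycle structure of $\sigma$ (which consists of André cycles by Theorem~\ref{thm:web=andre_cycle}), the value $\sigma_1$ is constrained to be odd} — possibly phrased via Theorem~\ref{thm:webn=Andren+2}, passing to the André cycle $\phi(\sigma) \in \AC_{n+2}$ and analyzing where $2$ sits relative to $1$ and $n+2$ and how the matching-condition $M(\sigma)=M_0^{(n)}$ restricts the André permutation $\widehat{\sigma}$. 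I would try both routes (direct strand-chasing in the grid, versus the André-cycle reformulation) and expect the André-permutation route to be cleaner: the condition $M(\sigma)=M_0^{(n)}$ should correspond to $\widehat\sigma$ being a specific kind of André permutation, and within such a permutation the first letter $\sigma_1 = \widehat\sigma_1$ cannot be even by a parity/descent parity count on the André structure. Establishing that parity invariant rigorously is the step I expect to take the most care.
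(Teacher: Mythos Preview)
Your strand-chasing idea in column~1 is exactly the paper's approach, but you are dramatically overestimating the work required. The argument is immediate and needs no elbow bookkeeping, no parity invariant on Andr\'e permutations, and no appeal to Theorem~\ref{thm:webn=Andren+2}. Here is all there is to it: in $G(\sigma,\Cr(\sigma))$ the marked cell is $(1,\sigma_1)=(1,2k)$, and the horizontal segment from that cell goes directly to the left boundary interval labeled $2k$ (there is nothing to its left). The vertical segment from the same marked cell goes upward, so the strand through boundary label $2k$ continues to some boundary interval $j>2k$. Hence $2k$ is an \emph{opener} in $M(\sigma)$. But in $M_0^{(n)}$ the arc through $2k$ is $\{2k-1,2k\}$, so $2k$ is a closer. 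Therefore $M(\sigma)\neq M_0^{(n)}$. That is the entire proof in the paper.

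Two further remarks on your writeup. First, your opening attempt to use $D(\sigma)\subseteq D(M_0^{(n)})$ conflates the row index (a nonnesting matching $M$, for which Theorem~\ref{thm:main_intro} gives the constraint $D(\sigma)\subseteq D(M)$) with the column index (a noncrossing matching $M'$, for which the constraint is $M(\sigma)=M'$); you correctly abandon this, but it is worth being clear that the condition $M(\sigma)=M_0^{(n)}$ does \emph{not} force $D(\sigma)$ to be minimal. Second, you never actually need to trace the strand past the marked cell: the single fact that $\sigma_1$ is an opener in $M(\sigma)$, which holds for \emph{every} permutation $\sigma$ (web or not), is enough. All the machinery you propose to develop would work, but it is superfluous.
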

\begin{proof}
  Let $\sigma$ be a web permutation of $[n]$ with $\sigma_1=2k$.
  Then considering the grid configuration \( G(\sigma,\Cr(\sigma)) \),
  the associated matching $M(\sigma)$ has an arc connecting $2k$
  and some $j$ with $2k<j$. Since there is the arc connecting $2k-1$ and $2k$
  in $M^{(n)}_0$, we deduce $M(\sigma)\neq M^{(n)}_0$.
\end{proof}

\begin{prop}\label{prop:web-Genocchi odd n,n=0}
  For $n>1$, we have $f(n,n)=0$. 
\end{prop}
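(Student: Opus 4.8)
The plan is to show that \emph{every} web permutation \( \sigma\in\SYM_n \) with \( \sigma_1=n \) satisfies \( \{1,2n\}\in M(\sigma) \). Since \( M^{(n)}_0 \) contains the arc \( \{1,2\} \) and \( 2n\neq 2 \) when \( n>1 \), this forces \( M(\sigma)\neq M^{(n)}_0 \), so no such \( \sigma \) is counted by \( f(n,n) \); that is, \( f(n,n)=0 \).

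First I would determine the cycle of \( \sigma \) through \( 1 \). By Theorem~\ref{thm:web=andre_cycle}, every cycle of a web permutation is an Andr\'e cycle. Writing the cycle containing \( 1 \) as \( C=(1,a_2,\dots,a_k) \) with \( 1=\min C \), we have \( a_2=\sigma(1)=n \). Lemma~\ref{lem:cycle_minmax} gives \( a_k=\max C \); but \( n\in C \) is the largest possible value, so \( \max C=n=a_2 \), and distinctness of the entries of a cycle forces \( k=2 \). Hence \( C=(1,n) \); in particular \( \sigma(n)=1 \), equivalently \( \sigma^{-1}(1)=n \).

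Next I would read the relevant strand off the grid configuration \( G(\sigma,\Cr(\sigma)) \). Since \( \sigma^{-1}(1)=n \), the marked cell in the bottom row is \( (n,1) \); moreover row \( 1 \) has no crossing (a crossing \( (i,1) \) would require \( \sigma(i)<1 \)) and column \( n \) has no crossing (a crossing \( (n,j) \) would require \( n<\sigma^{-1}(j) \)). Consequently the cells \( (1,1),\dots,(n-1,1) \) carry only a horizontal segment and the cells \( (n,2),\dots,(n,n) \) only a vertical one, so the strand entering at the boundary interval labelled \( 1 \) runs straight along row \( 1 \) to \( (n,1) \), turns upward there, and runs straight up column \( n \) to the boundary interval labelled \( 2n \), meeting no elbow on the way. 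Thus \( \{1,2n\}\in M(\sigma) \), independently of how the crossings of \( \sigma \) are resolved, and the proposition follows as indicated. The only point needing care is this last bit of bookkeeping — that the strand from \( 1 \) to \( 2n \) is genuinely isolated — which is a direct check from the definitions of marked cells and crossings; everything else is immediate. (The hypothesis \( n>1 \) is essential: for \( n=1 \) the only candidate is the identity, and there \( M(\sigma)=M^{(1)}_0 \).)
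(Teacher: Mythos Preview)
Your proof is correct and follows essentially the same route as the paper: use Theorem~\ref{thm:web=andre_cycle} and Lemma~\ref{lem:cycle_minmax} to deduce \( \sigma(n)=1 \), then observe that the marking at \( (n,1) \) has neither a crossing in its row nor in its column, so the strand joins boundary intervals \( 1 \) and \( 2n \), forcing \( \{1,2n\}\in M(\sigma)\neq M^{(n)}_0 \). Your version is slightly more explicit (you derive that the cycle is exactly \( (1,n) \), and you spell out why row \( 1 \) and column \( n \) carry no crossings), but the argument is the same.
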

\begin{proof}
  Let $\sigma$ be a web permutation of $[n]$ with $\sigma_1=n$.
  Since the elements \( 1 \) and \( n \) are contained in the same cycle, we have
  \( \sigma_n = 1 \) by Lemma~\ref{lem:cycle_minmax} and
  Theorem~\ref{thm:web=andre_cycle}. Then there is a marking at $(n,1)$
  in the grid configuration \( G(\sigma,\Cr(\sigma)) \).
  Observe that the vertical line and horizontal line starting from
  the cell $(n,1)$ do not make a crossing.
  Hence we deduce $\{1,2n\}\in M(\sigma)$,
  which implies that $M(\sigma)\neq M^{(n)}_0$.
\end{proof}

By Propositions~\ref{prop:web-Genocchi even k=0} and \ref{prop:web-Genocchi odd n,n=0}, we have
\[
  f(n)=\sum_{1\le k \le \lfloor n/2 \rfloor}f(n,2k-1).
\]
We now propose a conjecture that the values appearing in the Seidel triangle are $f(n,k)$.
\begin{conj}[Verified up to \( n=6 \)] \label{Conj: Genocchi}
  For \( n\ge 1 \), we have
  \[
    \begin{cases}
      f(2n-1,2k-1)= s_{2n-2, k},\\
      f(2n, 2k-1) = s_{2n-1, n-k+1}.
    \end{cases}
  \]
\end{conj}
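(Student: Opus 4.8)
The plan is to first reduce Conjecture~\ref{Conj: Genocchi} to a single recursion with no mention of the Seidel triangle, and then to prove that recursion combinatorially in the grid model. Write $\mathcal{W}(m)=\{\sigma\in\Web_m:M(\sigma)=M^{(m)}_0\}$, so $f(m,a)=|\{\sigma\in\mathcal{W}(m):\sigma_1=a\}|$, and use the conventions $f(m,j)=0$ for $j\le0$ or $j\ge m$; here $f(m,m)=0$ for $m>1$ is Proposition~\ref{prop:web-Genocchi odd n,n=0} and $f(m,2k)=0$ is Proposition~\ref{prop:web-Genocchi even k=0}, so the conjecture does describe every permutation counted by $f(m)$. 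Unwinding the two boustrophedon rules $s_{2i,j}=s_{2i,j+1}+s_{2i-1,j}$, $s_{2i+1,j}=s_{2i+1,j-1}+s_{2i,j}$ together with the two reindexings appearing in the conjecture, one checks by a routine induction that Conjecture~\ref{Conj: Genocchi} is equivalent to $f(1,1)=1$ together with the uniform recursion
\[
  f(m,a)=f(m,a+2)+f(m-1,a')\qquad(m\ge2,\ a\ \mathrm{odd},\ 1\le a<m),
\]
where $a'$ is the odd integer among $m-a-1$ and $m-a$ (thus $a'=m-a-1$ for $m$ odd and $a'=m-a$ for $m$ even); equivalently, $f(m,a)=|\{\tau\in\mathcal{W}(m-1):\tau_1\le m-a\}|$. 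Since $\Web_1=\{1\}$ and $M(1)=M^{(1)}_0$, the seed $f(1,1)=1$ is immediate, so the whole conjecture comes down to the displayed recursion.

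To prove the recursion, the idea is a size-decreasing surgery on grid configurations. Given $\sigma\in\mathcal{W}(m)$ with $\sigma_1=a$, consider the strand of $G(\sigma,\Cr(\sigma))$ joining the boundary intervals $m$ and $m+1$ when $m$ is odd (these are matched by $M^{(m)}_0$ via the arc $\{m,m+1\}$), or joining $m+1$ and $m+2$ when $m$ is even. As in the proofs of Propositions~\ref{prop:web-Genocchi even k=0} and~\ref{prop:web-Genocchi odd n,n=0}, the demand that this strand realize an arc of $M^{(m)}_0$ rigidly fixes the markings in the top row and the last column, hence, through Theorem~\ref{thm:web=andre_cycle}, part of the cycle structure of $\sigma$; deleting this strand together with the associated row and column and relabelling the remaining $2(m-1)$ boundary intervals should yield the grid configuration of a web permutation $\tau\in\mathcal{W}(m-1)$. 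The recursion would then read: either the marking in the bottom row of $\sigma$ can be slid from column $a$ to column $a+2$, producing a $\sigma'\in\mathcal{W}(m)$ with $\sigma'_1=a+2$, or the slide is obstructed, and then the surgery produces a $\tau\in\mathcal{W}(m-1)$ whose bottom-row marking, tracked through the surgery, sits in column $a'$. The parity dichotomy is exactly this: the boundary intervals $m$ and $m+1$ belong to a common arc of $M^{(m)}_0$ precisely when $m$ is odd, so ``remove one row and one column'' preserves the $M_0$-pattern only in that case; for $m$ even one argues instead with the column pair $\{m+1,m+2\}$, which is what switches $a'$ between $m-a-1$ and $m-a$.

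A possibly more tractable variant is to transport everything through the bijection $\phi:\Web_m\to\AC_{m+2}$ of Theorem~\ref{thm:webn=Andren+2}. Since $\widehat\sigma_1=\sigma_1$, the statistic $\sigma_1$ is the second entry of the André cycle $\phi(\sigma)=(1,\sigma_1+1,\widehat\sigma_2+1,\dots,\widehat\sigma_m+1,m+2)$, so $f(m,a)$ counts André cycles of $[m+2]$ with second entry $a+1$ and with the extra property corresponding to $M(\sigma)=M^{(m)}_0$. The first task here is to describe that property intrinsically in terms of the André cycle, equivalently of the underlying André permutation. Once this is available, the recursion should follow from the standard boustrophedon combinatorics of Genocchi numbers; as Theorem~\ref{thm:Geno} already yields the row sums $f(m)=g_m$, one expects the refinement by $\sigma_1$ to be recoverable from, or at least consistent with, Nakamigawa's argument.

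The main obstacle is the middle step in either route: making the surgery (respectively, the intrinsic description of the $M^{(m)}_0$-condition on $\AC_{m+2}$) precise, and checking that it is a bijection with the stated effect on $\sigma_1$ while keeping every intermediate object a genuine web configuration, i.e.\ a product of André cycles by Theorem~\ref{thm:web=andre_cycle}. Because of the even/odd asymmetry this amounts to two separate lemmas, and isolating the concrete grid operation that implements the shift $a\mapsto a'$, especially in the ``reflected'' even case, is the delicate point. I would first compute the recursion by hand on the grid for $m\le6$, where Conjecture~\ref{Conj: Genocchi} is already verified, to identify the right surgery, and then decide whether to push the grid argument through or to import the refinement from the André-cycle / Genocchi side.
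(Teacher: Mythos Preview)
The statement in question is Conjecture~\ref{Conj: Genocchi}, which the paper explicitly labels as a conjecture, verified only computationally up to $n=6$; the paper offers no proof. So there is no ``paper's own proof'' to compare your attempt against. What you have written is not a proof either, and you are candid about this: you outline two possible strategies (a grid surgery and a transport through $\phi$ to Andr\'e cycles), but you yourself identify the ``main obstacle'' as making the surgery precise and checking it is a bijection with the claimed effect on $\sigma_1$, and you end by proposing to experiment for $m\le6$ before deciding how to proceed. That is a reasonable research plan, not a proof.

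On the substance of the plan: your reduction of the conjecture to the uniform recursion $f(m,a)=f(m,a+2)+f(m-1,a')$ with $a'$ the odd one of $m-a-1,\,m-a$ is correct and is the natural first step; unwinding the two Seidel rules against the two reindexings in the conjecture gives exactly this. The observation $\widehat{\sigma}_1=\sigma_1$ is also correct, so $\sigma_1$ does transfer cleanly to the second entry of the Andr\'e cycle $\phi(\sigma)$. The genuine gap is everything after that. In the grid route you have not specified which row and column are deleted, nor why the result is again a web permutation with $M(\tau)=M_0^{(m-1)}$; in particular, the arc $\{m,m+1\}$ of $M_0^{(m)}$ lives at the top-left corner of the grid, not in the last column, so ``deleting this strand together with the associated row and column'' needs to be made concrete before one can even check the effect on $\sigma_1$. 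In the Andr\'e-cycle route the missing piece is an intrinsic description of the condition $M(\sigma)=M_0^{(m)}$ on $\phi(\sigma)$; without it you cannot hook into any known Genocchi combinatorics. Either of these would be a nontrivial lemma, and until one of them is supplied the argument remains a heuristic.
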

This conjecture includes Nakamigawa's result. To elaborate, let \( \sigma \) be a
web permutation of \( [n] \) such that \( M(\sigma)=M^{(n)}_0 \) and
\( \sigma_1 = 1 \). Deleting the cycle \( (1) \) from \( \sigma \) and decreasing
each letter by 1, the resulting permutation is a web permutation of
\( [n-1] \) with \( M(\sigma)=M^{(n-1)}_0 \).
In addition, this correspondence is bijective, so we deduce \( f(n,1) = f(n-1) \).
Thus the conjecture implies \( f(n-1) = g_{n-1} \), which is Nakamigawa's result.

\section*{Acknowledgments}

The authors are grateful to Jang Soo Kim for several suggestions which
improved the manuscript.

\appendix
\section{Computations for some small \( n \)} \label{sec:appen}
In this appendix, we give several computational results for some small \( n \).

\subsection{The transition matrices}
All rows and columns are sorted with respect to the reverse lexicographic order
on their corresponding Dyck paths.
For example, let \( n = 3 \). The following is the list of 5 Dyck paths of
length \( 2n=6 \) sorted in the reverse lexicographic order:
\[
  \Dyck_{6} = \{ \NS\NS\NS\ES\ES\ES, \NS\NS\ES\NS\ES\ES, \NS\NS\ES\ES\NS\ES,
  \NS\ES\NS\NS\ES\ES, \NS\ES\NS\ES\NS\ES \}.
\]
Then rows and columns of the transition matrix for \( n=3 \) are indexed by
\( \NN_6 \) and \( \NC_6 \) in order as follows:
\[
  \NN_6 = \left\{ 
    \begin{tikzpicture}[scale=0.4]
      \Matching{1}{4}\Matching{2}{5}\Matching{3}{6}
      \foreach \i in {1,...,6}{
        \draw [fill] (\i,0) circle [radius=0.075] ;
      }
    \end{tikzpicture},~
    \begin{tikzpicture}[scale=0.4]
      \Matching{1}{3}\Matching{2}{5}\Matching{4}{6}
      \foreach \i in {1,...,6}{
        \draw [fill] (\i,0) circle [radius=0.075] ;
      }
    \end{tikzpicture},~
    \begin{tikzpicture}[scale=0.4]
      \Matching{1}{3}\Matching{2}{4}\Matching{5}{6}
      \foreach \i in {1,...,6}{
        \draw [fill] (\i,0) circle [radius=0.075] ;
      }
    \end{tikzpicture},~
    \begin{tikzpicture}[scale=0.4]
      \Matching{1}{2}\Matching{3}{5}\Matching{4}{6}
      \foreach \i in {1,...,6}{
        \draw [fill] (\i,0) circle [radius=0.075] ;
      }
    \end{tikzpicture},~
    \begin{tikzpicture}[scale=0.4]
      \Matching{1}{2}\Matching{3}{4}\Matching{5}{6}
      \foreach \i in {1,...,6}{
        \draw [fill] (\i,0) circle [radius=0.075] ;
      }
    \end{tikzpicture}
  \right\}
\]
and
\[
  \NC_6 = \left\{ 
    \begin{tikzpicture}[scale=0.4]
      \Matching{1}{6}\Matching{2}{5}\Matching{3}{4}
      \foreach \i in {1,...,6}{
        \draw [fill] (\i,0) circle [radius=0.075] ;
      }
    \end{tikzpicture},~
    \begin{tikzpicture}[scale=0.4]
      \Matching{1}{6}\Matching{2}{3}\Matching{4}{5}
      \foreach \i in {1,...,6}{
        \draw [fill] (\i,0) circle [radius=0.075] ;
      }
    \end{tikzpicture},~
    \begin{tikzpicture}[scale=0.4]
      \Matching{1}{4}\Matching{2}{3}\Matching{5}{6}
      \foreach \i in {1,...,6}{
        \draw [fill] (\i,0) circle [radius=0.075] ;
      }
    \end{tikzpicture},~
    \begin{tikzpicture}[scale=0.4]
      \Matching{1}{2}\Matching{3}{6}\Matching{4}{5}
      \foreach \i in {1,...,6}{
        \draw [fill] (\i,0) circle [radius=0.075] ;
      }
    \end{tikzpicture},~
    \begin{tikzpicture}[scale=0.4]
      \Matching{1}{2}\Matching{3}{4}\Matching{5}{6}
      \foreach \i in {1,...,6}{
        \draw [fill] (\i,0) circle [radius=0.075] ;
      }
    \end{tikzpicture}
  \right\}.
\]
We omit zeros in the strictly lower-triangular part of \( A \).
\begin{enumerate}[label=\roman*)]
  \item \( n=2 \)
  \[
    A = 
    \begin{bmatrix}
      1 & 1 \\
        & 1 
    \end{bmatrix}.
  \]
  \item \( n=3 \)
  \[
    A = 
    \begin{bmatrix}
      1 & 1 & 1 & 1 & 1 \\
        & 1 & 1 & 1 & 1 \\
        &   & 1 & 0 & 1 \\
        &   &   & 1 & 1 \\
        &   &   &   & 1
    \end{bmatrix}.
  \]
  \item \( n=4 \)
  \[
    A = 
    \begin{bmatrix}
      1 & 1 & 1 & 1 & 1 & 2 & 1 & 1 & 1 & 1 & 1 & 1 & 1 & 2  \\
        & 1 & 1 & 1 & 1 & 1 & 1 & 1 & 1 & 1 & 1 & 1 & 1 & 2  \\
        &   & 1 & 1 & 0 & 1 & 1 & 1 & 1 & 0 & 1 & 1 & 1 & 1  \\
        &   &   & 1 & 0 & 0 & 1 & 0 & 1 & 0 & 0 & 1 & 0 & 1  \\
        &   &   &   & 1 & 1 & 1 & 1 & 1 & 1 & 1 & 1 & 1 & 1  \\
        &   &   &   &   & 1 & 1 & 1 & 1 & 0 & 1 & 1 & 1 & 1  \\
        &   &   &   &   &   & 1 & 0 & 1 & 0 & 0 & 1 & 0 & 1  \\
        &   &   &   &   &   &   & 1 & 1 & 0 & 0 & 0 & 1 & 1  \\
        &   &   &   &   &   &   &   & 1 & 0 & 0 & 0 & 0 & 1  \\
        &   &   &   &   &   &   &   &   & 1 & 1 & 1 & 1 & 1  \\
        &   &   &   &   &   &   &   &   &   & 1 & 1 & 1 & 1  \\
        &   &   &   &   &   &   &   &   &   &   & 1 & 0 & 1  \\
        &   &   &   &   &   &   &   &   &   &   &   & 1 & 1  \\
        &   &   &   &   &   &   &   &   &   &   &   &   & 1 
    \end{bmatrix}.
  \]
\end{enumerate}
\subsection{Web permutations}
We present lists of all web permutations for \( n=2,3,4,5 \) with their
corresponding Dyck paths and noncrossing matchings.
Due to space limitation, the matchings are also represented as Dyck paths
via the bijection \( D:\NC_{2n}\rightarrow \Dyck_{2n} \).
\begin{enumerate}[label=\roman*)]
  \item \( n=2 \)
  \[
    \begin{tabular}{|c|c|c|}
      \hline
      Web permutations \( \sigma \) & \( D(\sigma) \) & \( M(\sigma) \) \\ \hline
      12 = (1)(2) & \( \NS\ES\NS\ES \) & \( \NS\ES\NS\ES \) \\ \hline
      21 = (1,2) & \( \NS\NS\ES\ES \) & \( \NS\NS\ES\ES \) \\ \hline
    \end{tabular}
  \]
  \item \( n=3 \)
  \[
    \begin{tabular}{|c|c|c|}
      \hline
      Web permutations \( \sigma \) & \( D(\sigma) \) & \( M(\sigma) \) \\ \hline
      123 = (1)(2)(3) & \( \NS\ES\NS\ES\NS\ES \) & \( \NS\ES\NS\ES\NS\ES \) \\ \hline
      132 = (1)(2,3) & \( \NS\ES\NS\NS\ES\ES \) & \( \NS\ES\NS\NS\ES\ES \) \\ \hline
      213 = (1,2)(3) & \( \NS\NS\ES\ES\NS\ES \) & \( \NS\NS\ES\ES\NS\ES \) \\ \hline
      231 = (1,2,3) & \( \NS\NS\NS\ES\ES\ES \) & \( \NS\NS\ES\NS\ES\ES \) \\ \hline
      321 = (1,3)(2) & \( \NS\NS\NS\ES\ES\ES \) & \( \NS\NS\NS\ES\ES\ES \) \\ \hline
    \end{tabular}
  \]
  \item \( n=4 \)
  \[
    \begin{tabular}{|c|c|c|}
      \hline
      Web permutations \( \sigma \) & \( D(\sigma) \) & \( M(\sigma) \) \\ \hline
      1234 = (1)(2)(3)(4) & \( \NS\ES\NS\ES\NS\ES\NS\ES \) & \( \NS\ES\NS\ES\NS\ES\NS\ES \) \\ \hline
      1243 = (1)(2)(3,4) & \( \NS\ES\NS\ES\NS\NS\ES\ES \) & \( \NS\ES\NS\ES\NS\NS\ES\ES \) \\ \hline
      1324 = (1)(2,3)(4) & \( \NS\ES\NS\NS\ES\ES\NS\ES \) & \( \NS\ES\NS\NS\ES\ES\NS\ES \) \\ \hline
      1342 = (1)(2,3,4) & \( \NS\ES\NS\NS\NS\ES\ES\ES \) & \( \NS\ES\NS\NS\ES\NS\ES\ES \) \\ \hline
      1432 = (1)(2,4)(3) & \( \NS\ES\NS\NS\NS\ES\ES\ES \) & \( \NS\ES\NS\NS\NS\ES\ES\ES \) \\ \hline
      2134 = (1,2)(3)(4) & \( \NS\NS\ES\ES\NS\ES\NS\ES \) & \( \NS\NS\ES\ES\NS\ES\NS\ES \) \\ \hline
      2143 = (1,2)(3,4) & \( \NS\NS\ES\ES\NS\NS\ES\ES \) & \( \NS\NS\ES\ES\NS\NS\ES\ES \) \\ \hline
      2314 = (1,2,3)(4) & \( \NS\NS\NS\ES\ES\ES\NS\ES \) & \( \NS\NS\ES\NS\ES\ES\NS\ES \) \\ \hline
      3214 = (1,3)(2)(4) & \( \NS\NS\NS\ES\ES\ES\NS\ES \) & \( \NS\NS\NS\ES\ES\ES\NS\ES \) \\ \hline
      3412 = (1,3)(2,4) & \( \NS\NS\NS\ES\NS\ES\ES\ES \) & \( \NS\ES\NS\ES\NS\ES\NS\ES \) \\ \hline
      2341 = (1,2,3,4) & \( \NS\NS\NS\NS\ES\ES\ES\ES \) & \( \NS\NS\ES\NS\ES\NS\ES\ES \) \\ \hline
      2431 = (1,2,4)(3) & \( \NS\NS\NS\NS\ES\ES\ES\ES \) & \( \NS\NS\ES\NS\NS\ES\ES\ES \) \\ \hline
      3241 = (1,3,4)(2) & \( \NS\NS\NS\NS\ES\ES\ES\ES \) & \( \NS\NS\NS\ES\ES\NS\ES\ES \) \\ \hline
      4231 = (1,4)(2)(3) & \( \NS\NS\NS\NS\ES\ES\ES\ES \) & \( \NS\NS\ES\NS\ES\NS\ES\ES \) \\ \hline
      3421 = (1,3,2,4) & \( \NS\NS\NS\NS\ES\ES\ES\ES \) & \( \NS\NS\NS\ES\NS\ES\ES\ES \) \\ \hline
      4321 = (1,4)(2,3) & \( \NS\NS\NS\NS\ES\ES\ES\ES \) & \( \NS\NS\NS\NS\ES\ES\ES\ES \) \\ \hline
    \end{tabular}
  \]
  \item \( n=5 \)
  \[
    \begin{tabular}{|c|c|c|}
      \hline
      Web permutations \( \sigma \) & \( D(\sigma) \) & \( M(\sigma) \) \\ \hline
      12345 = (1)(2)(3)(4)(5) & \( \NS\ES\NS\ES\NS\ES\NS\ES\NS\ES \) & \( \NS\ES\NS\ES\NS\ES\NS\ES\NS\ES \) \\ \hline
      12354 = (1)(2)(3)(4,5) & \( \NS\ES\NS\ES\NS\ES\NS\NS\ES\ES \) & \( \NS\ES\NS\ES\NS\ES\NS\NS\ES\ES \) \\ \hline
      12435 = (1)(2)(3,4)(5) & \( \NS\ES\NS\ES\NS\NS\ES\ES\NS\ES \) & \( \NS\ES\NS\ES\NS\NS\ES\ES\NS\ES \) \\ \hline
      12453 = (1)(2)(3,4,5) & \( \NS\ES\NS\ES\NS\NS\NS\ES\ES\ES \) & \( \NS\ES\NS\ES\NS\NS\ES\NS\ES\ES \) \\ \hline
      12543 = (1)(2)(3,5)(4) & \( \NS\ES\NS\ES\NS\NS\NS\ES\ES\ES \) & \( \NS\ES\NS\ES\NS\NS\NS\ES\ES\ES \) \\ \hline
      13245 = (1)(2,3)(4)(5) & \( \NS\ES\NS\NS\ES\ES\NS\ES\NS\ES \) & \( \NS\ES\NS\NS\ES\ES\NS\ES\NS\ES \) \\ \hline
      13254 = (1)(2,3)(4,5) & \( \NS\ES\NS\NS\ES\ES\NS\NS\ES\ES \) & \( \NS\ES\NS\NS\ES\ES\NS\NS\ES\ES \) \\ \hline
      13425 = (1)(2,3,4)(5) & \( \NS\ES\NS\NS\NS\ES\ES\ES\NS\ES \) & \( \NS\ES\NS\NS\ES\NS\ES\ES\NS\ES \) \\ \hline
      14325 = (1)(2,4)(3)(5) & \( \NS\ES\NS\NS\NS\ES\ES\ES\NS\ES \) & \( \NS\ES\NS\NS\NS\ES\ES\ES\NS\ES \) \\ \hline
      14523 = (1)(2,4)(3,5) & \( \NS\ES\NS\NS\NS\ES\NS\ES\ES\ES \) & \( \NS\ES\NS\ES\NS\ES\NS\ES\NS\ES \) \\ \hline
      13452 = (1)(2,3,4,5) & \( \NS\ES\NS\NS\NS\NS\ES\ES\ES\ES \) & \( \NS\ES\NS\NS\ES\NS\ES\NS\ES\ES \) \\ \hline
      13542 = (1)(2,3,5)(4) & \( \NS\ES\NS\NS\NS\NS\ES\ES\ES\ES \) & \( \NS\ES\NS\NS\ES\NS\NS\ES\ES\ES \) \\ \hline
      14352 = (1)(2,4,5)(3) & \( \NS\ES\NS\NS\NS\NS\ES\ES\ES\ES \) & \( \NS\ES\NS\NS\NS\ES\ES\NS\ES\ES \) \\ \hline
      15342 = (1)(2,5)(3)(4) & \( \NS\ES\NS\NS\NS\NS\ES\ES\ES\ES \) & \( \NS\ES\NS\NS\ES\NS\ES\NS\ES\ES \) \\ \hline
      14532 = (1)(2,4,3,5) & \( \NS\ES\NS\NS\NS\NS\ES\ES\ES\ES \) & \( \NS\ES\NS\NS\NS\ES\NS\ES\ES\ES \) \\ \hline
      15432 = (1)(2,5)(3,4) & \( \NS\ES\NS\NS\NS\NS\ES\ES\ES\ES \) & \( \NS\ES\NS\NS\NS\NS\ES\ES\ES\ES \) \\ \hline
      21345 = (1,2)(3)(4)(5) & \( \NS\NS\ES\ES\NS\ES\NS\ES\NS\ES \) & \( \NS\NS\ES\ES\NS\ES\NS\ES\NS\ES \) \\ \hline
      21354 = (1,2)(3)(4,5) & \( \NS\NS\ES\ES\NS\ES\NS\NS\ES\ES \) & \( \NS\NS\ES\ES\NS\ES\NS\NS\ES\ES \) \\ \hline
      21435 = (1,2)(3,4)(5) & \( \NS\NS\ES\ES\NS\NS\ES\ES\NS\ES \) & \( \NS\NS\ES\ES\NS\NS\ES\ES\NS\ES \) \\ \hline
      21453 = (1,2)(3,4,5) & \( \NS\NS\ES\ES\NS\NS\NS\ES\ES\ES \) & \( \NS\NS\ES\ES\NS\NS\ES\NS\ES\ES \) \\ \hline
      21543 = (1,2)(3,5)(4) & \( \NS\NS\ES\ES\NS\NS\NS\ES\ES\ES \) & \( \NS\NS\ES\ES\NS\NS\NS\ES\ES\ES \) \\ \hline
      23145 = (1,2,3)(4)(5) & \( \NS\NS\NS\ES\ES\ES\NS\ES\NS\ES \) & \( \NS\NS\ES\NS\ES\ES\NS\ES\NS\ES \) \\ \hline
      23154 = (1,2,3)(4,5) & \( \NS\NS\NS\ES\ES\ES\NS\NS\ES\ES \) & \( \NS\NS\ES\NS\ES\ES\NS\NS\ES\ES \) \\ \hline
      32145 = (1,3)(2)(4)(5) & \( \NS\NS\NS\ES\ES\ES\NS\ES\NS\ES \) & \( \NS\NS\NS\ES\ES\ES\NS\ES\NS\ES \) \\ \hline
      32154 = (1,3)(2)(4,5) & \( \NS\NS\NS\ES\ES\ES\NS\NS\ES\ES \) & \( \NS\NS\NS\ES\ES\ES\NS\NS\ES\ES \) \\ \hline
      34125 = (1,3)(2,4)(5) & \( \NS\NS\NS\ES\NS\ES\ES\ES\NS\ES \) & \( \NS\ES\NS\ES\NS\ES\NS\ES\NS\ES \) \\ \hline
      34152 = (1,3)(2,4,5) & \( \NS\NS\NS\ES\NS\NS\ES\ES\ES\ES \) & \( \NS\ES\NS\ES\NS\ES\NS\NS\ES\ES \) \\ \hline
      35142 = (1,3)(2,5)(4) & \( \NS\NS\NS\ES\NS\NS\ES\ES\ES\ES \) & \( \NS\ES\NS\ES\NS\NS\ES\NS\ES\ES \) \\ \hline
      23415 = (1,2,3,4)(5) & \( \NS\NS\NS\NS\ES\ES\ES\ES\NS\ES \) & \( \NS\NS\ES\NS\ES\NS\ES\ES\NS\ES \) \\ \hline
      24315 = (1,2,4)(3)(5) & \( \NS\NS\NS\NS\ES\ES\ES\ES\NS\ES \) & \( \NS\NS\ES\NS\NS\ES\ES\ES\NS\ES \) \\ \hline
      24513 = (1,2,4)(3,5) & \( \NS\NS\NS\NS\ES\ES\NS\ES\ES\ES \) & \( \NS\NS\ES\ES\NS\ES\NS\ES\NS\ES \) \\ \hline
      32415 = (1,3,4)(2)(5) & \( \NS\NS\NS\NS\ES\ES\ES\ES\NS\ES \) & \( \NS\NS\NS\ES\ES\NS\ES\ES\NS\ES \) \\ \hline
      42315 = (1,4)(2)(3)(5) & \( \NS\NS\NS\NS\ES\ES\ES\ES\NS\ES \) & \( \NS\NS\ES\NS\ES\NS\ES\ES\NS\ES \) \\ \hline
      42513 = (1,4)(2)(3,5) & \( \NS\NS\NS\NS\ES\ES\NS\ES\ES\ES \) & \( \NS\NS\ES\NS\ES\ES\NS\ES\NS\ES \) \\ \hline
      34215 = (1,3,2,4)(5) & \( \NS\NS\NS\NS\ES\ES\ES\ES\NS\ES \) & \( \NS\NS\NS\ES\NS\ES\ES\ES\NS\ES \) \\ \hline
      43215 = (1,4)(2,3)(5) & \( \NS\NS\NS\NS\ES\ES\ES\ES\NS\ES \) & \( \NS\NS\NS\NS\ES\ES\ES\ES\NS\ES \) \\ \hline
      35412 = (1,3,4)(2,5) & \( \NS\NS\NS\NS\ES\NS\ES\ES\ES\ES \) & \( \NS\ES\NS\ES\NS\NS\ES\ES\NS\ES \) \\ \hline
      43512 = (1,4)(2,3,5) & \( \NS\NS\NS\NS\ES\NS\ES\ES\ES\ES \) & \( \NS\ES\NS\NS\ES\ES\NS\ES\NS\ES \) \\ \hline
      45312 = (1,4)(2,5)(3) & \( \NS\NS\NS\NS\ES\NS\ES\ES\ES\ES \) & \( \NS\ES\NS\NS\ES\NS\ES\ES\NS\ES \) \\ \hline
    \end{tabular}
  \]
  \[\begin{tabular}{|c|c|c|}
    \hline
    Web permutations \( \sigma \) & \( D(\sigma) \) & \( M(\sigma) \) \\ \hline
      23451 = (1,2,3,4,5) & \( \NS\NS\NS\NS\NS\ES\ES\ES\ES\ES \) & \( \NS\NS\ES\NS\ES\NS\ES\NS\ES\ES \) \\ \hline
      23541 = (1,2,3,5)(4) & \( \NS\NS\NS\NS\NS\ES\ES\ES\ES\ES \) & \( \NS\NS\ES\NS\ES\NS\NS\ES\ES\ES \) \\ \hline
      24351 = (1,2,4,5)(3) & \( \NS\NS\NS\NS\NS\ES\ES\ES\ES\ES \) & \( \NS\NS\ES\NS\NS\ES\ES\NS\ES\ES \) \\ \hline
      25341 = (1,2,5)(3)(4) & \( \NS\NS\NS\NS\NS\ES\ES\ES\ES\ES \) & \( \NS\NS\ES\NS\ES\NS\ES\NS\ES\ES \) \\ \hline
      24531 = (1,2,4,3,5) & \( \NS\NS\NS\NS\NS\ES\ES\ES\ES\ES \) & \( \NS\NS\ES\NS\NS\ES\NS\ES\ES\ES \) \\ \hline
      25431 = (1,2,5)(3,4) & \( \NS\NS\NS\NS\NS\ES\ES\ES\ES\ES \) & \( \NS\NS\ES\NS\NS\NS\ES\ES\ES\ES \) \\ \hline
      32451 = (1,3,4,5)(2) & \( \NS\NS\NS\NS\NS\ES\ES\ES\ES\ES \) & \( \NS\NS\NS\ES\ES\NS\ES\NS\ES\ES \) \\ \hline
      32541 = (1,3,5)(2)(4) & \( \NS\NS\NS\NS\NS\ES\ES\ES\ES\ES \) & \( \NS\NS\NS\ES\ES\NS\NS\ES\ES\ES \) \\ \hline
      42351 = (1,4,5)(2)(3) & \( \NS\NS\NS\NS\NS\ES\ES\ES\ES\ES \) & \( \NS\NS\ES\NS\ES\NS\ES\NS\ES\ES \) \\ \hline
      52341 = (1,5)(2)(3)(4) & \( \NS\NS\NS\NS\NS\ES\ES\ES\ES\ES \) & \( \NS\NS\ES\NS\NS\ES\ES\NS\ES\ES \) \\ \hline
      42531 = (1,4,3,5)(2) & \( \NS\NS\NS\NS\NS\ES\ES\ES\ES\ES \) & \( \NS\NS\ES\NS\ES\NS\NS\ES\ES\ES \) \\ \hline
      52431 = (1,5)(2)(3,4) & \( \NS\NS\NS\NS\NS\ES\ES\ES\ES\ES \) & \( \NS\NS\ES\NS\NS\ES\NS\ES\ES\ES \) \\ \hline
      34251 = (1,3,2,4,5) & \( \NS\NS\NS\NS\NS\ES\ES\ES\ES\ES \) & \( \NS\NS\NS\ES\NS\ES\ES\NS\ES\ES \) \\ \hline
      35241 = (1,3,2,5)(4) & \( \NS\NS\NS\NS\NS\ES\ES\ES\ES\ES \) & \( \NS\NS\NS\ES\ES\NS\ES\NS\ES\ES \) \\ \hline
      43251 = (1,4,5)(2,3) & \( \NS\NS\NS\NS\NS\ES\ES\ES\ES\ES \) & \( \NS\NS\NS\NS\ES\ES\ES\NS\ES\ES \) \\ \hline
      53241 = (1,5)(2,3)(4) & \( \NS\NS\NS\NS\NS\ES\ES\ES\ES\ES \) & \( \NS\NS\NS\ES\NS\ES\ES\NS\ES\ES \) \\ \hline
      34521 = (1,3,5)(2,4) & \( \NS\NS\NS\NS\NS\ES\ES\ES\ES\ES \) & \( \NS\NS\NS\ES\NS\ES\NS\ES\ES\ES \) \\ \hline
      35421 = (1,3,4,2,5) & \( \NS\NS\NS\NS\NS\ES\ES\ES\ES\ES \) & \( \NS\NS\NS\ES\NS\NS\ES\ES\ES\ES \) \\ \hline
      43521 = (1,4,2,3,5) & \( \NS\NS\NS\NS\NS\ES\ES\ES\ES\ES \) & \( \NS\NS\NS\NS\ES\ES\NS\ES\ES\ES \) \\ \hline
      53421 = (1,5)(2,3,4) & \( \NS\NS\NS\NS\NS\ES\ES\ES\ES\ES \) & \( \NS\NS\NS\ES\NS\ES\NS\ES\ES\ES \) \\ \hline
      45321 = (1,4,2,5)(3) & \( \NS\NS\NS\NS\NS\ES\ES\ES\ES\ES \) & \( \NS\NS\NS\NS\ES\NS\ES\ES\ES\ES \) \\ \hline
      54321 = (1,5)(2,4)(3) & \( \NS\NS\NS\NS\NS\ES\ES\ES\ES\ES \) & \( \NS\NS\NS\NS\NS\ES\ES\ES\ES\ES \) \\ \hline
    \end{tabular}
  \]
\end{enumerate}

\bibliographystyle{alpha}

\end{document}